\numberwithin{figure}{section}
\def\R{{\mathbb R}}
\def\C{{\mathbb C}}
\def\T{{\mathbb T}}
\def\Z{{\mathbb Z}}
\def\N{{\mathbb N}}
\def\Q{{\mathbb Q}}
\def\phe{\varphi}
\def\s{\vskip 0.25cm\noindent}
\def\e{\varepsilon}
\def\pa{\partial}
\def\build#1_#2^#3{\mathrel{
\mathop{\kern 0pt#1}\limits_{#2}^{#3}}}
\def\td_#1,#2{\mathrel{\mathop{\build\longrightarrow_{#1\rightarrow #2}^{}}}}
\newtheorem{theorem}{Theorem}
\newtheorem{corollary}{Corollary}
\newtheorem{proposition}{Proposition}
\newtheorem{lemma}{Lemma}
\newtheorem{remark}{Remark}
\begin{document}
\title[Generic colourful tori]{Generic colourful tori and inverse spectral transform for Hankel operators}
\author{Patrick G\'erard}
\address{Universit\'e Paris-Sud XI, Laboratoire de Math\'ematiques
d'Orsay, CNRS, UMR 8628, et Institut Universitaire de France} \email{{\tt Patrick.Gerard@math.u-psud.fr}}
\author[S. Grellier]{Sandrine Grellier}
\address{F\'ed\'eration Denis Poisson, MAPMO-UMR 6628,
D\'epartement de Math\'ematiques, Universit\'e d'Orleans, 45067
Orl\'eans Cedex 2, France} \email{{\tt
Sandrine.Grellier@univ-orleans.fr}}

\subjclass[2010]{35B65, 37K15, 47B35}

\date{December 4, 2017}

\begin{abstract} {This paper explores the regularity properties of an inverse spectral transform for Hilbert--Schmidt Hankel operators on the unit disc. This spectral transform plays the role of action-angles variables for an integrable infinite dimensional Hamiltonian system -- the cubic Szeg\H{o} equation. We investigate the regularity of functions on the  tori supporting the dynamics of this system, in connection with some wave turbulence phenomenon, discovered in a previous work and due to relative small gaps between the actions.  We revisit this phenomenon by proving that generic smooth functions and a $G_\delta$ dense set of irregular functions do coexist on the same torus. On the other hand, we establish some uniform analytic regularity for tori corresponding to rapidly decreasing actions  which satisfy some specific property ruling out the phenomenon of small gaps. }
\end{abstract}

\keywords{{Cubic Szeg\H{o} equation, action--angle variables, integrable system, Hankel operator, spectral analysis}}

\thanks {{P. G\'erard is  grateful to G. Chenevier for valuable discussions about elliptic functions. The first author is supported by ANR ANAE 13-BS01-0010-03.}}
\maketitle

\section{Introduction}
\subsection{The cubic Szeg\H{o} equation} This paper explores the properties of  some inverse spectral transformation related to an integrable infinite dimensional Hamiltonian system. Introduced in \cite{GG1}, the cubic Szeg\H{o} equation reads
\begin{equation}\label{szego}
i\pa_tu=\Pi (|u|^2u)\ ,
\end{equation}
where $u=u(t,x)$ is a function defined for $(t,x)\in \R \times \T$, $\T :=\R/2\pi \Z$, such that, for every $t\in \R$, $u(t,.)$ belongs to the Hardy space $L^2_+(\T )$ of $L^2$ functions $v$ on $\T $ with only nonnegative Fourier modes,
$$\forall n<0\ ,\ \hat v(n)=0\ .$$
Here 
$$\hat v(n)=\int_0^{2\pi}v(x)\,{\rm e}^{-inx}\, \frac{dx}{2\pi}\ ,\ n\in \Z$$
denotes the Fourier coefficient of $v\in L^2(\T )$, and $\Pi $ denotes the orthogonal projector from $L^2(\T )$ onto $L^2_+(\T )$,
$$\Pi \left (\sum_{n\in \Z}c_n\,{\rm e}^{inx}\right )=\sum_{n=0}^\infty c_n\,{\rm e}^{inx}\ .$$
 It has been proved in \cite{GG1} that \eqref{szego} is globally well posed on Sobolev spaces $H^s_+(\T ):=H^s(\T )\cap L^2_+(\T )$ for all $s\ge \frac 12$, with conservation of the $H^{\frac 12}$ norm. Recall that, for elements in $L^2_+(\T )$, the $H^s$ Sobolev norm reads
$$\Vert v\Vert _{H^s}^2=\sum_{n=0}^\infty (1+n)^{2s}|\hat v(n)|^2\ .$$
Furthermore, it turns out that \eqref{szego} enjoys an unexpected  Lax pair structure, discovered in \cite{GG1} and studied in \cite{GG2}, \cite{GG4}, \cite{GG6}.
More precisely, consider, for every $u\in H^{\frac 12}_+(\T )$, the Hankel operator $H_u:L^2_+(\T )\to L^2_+(\T )$ defined as
$$H_u(h)=\Pi (u\overline h)\ .$$
Notice that $H_u$ is an antilinear realization of the Hankel matrix $\Gamma_{\hat u}$, where, for every sequence  $\alpha =(\alpha _n)_{n\ge 0}$ of complex numbers, $\Gamma _\alpha $ denotes the operator on $\ell ^2(\Z_+)$ given by the infinite matrix $(\alpha _{n+p})_{n,p\ge 0}$. Indeed, if $\mathscr F$ denotes the Fourier transform $v\mapsto \hat v$ between $L^2_+(\T )$  and $\ell ^2(\Z _+)$,  it easy to check that
$$\mathscr FH_u\mathscr F^{-1}=\Gamma _{\hat u}\circ \mathcal C\ ,$$
where $\mathcal C$ denotes the complex conjugation. The Lax pair identity then reads as follows, see \cite{GG1}. If $s>\frac 12$ and $u$ is a $H^s_+$ solution of \eqref{szego}, then 
$$\frac{dH_u}{dt}=[B_u,H_u]\ ,$$
where $B_u$ is a linear antiselfadjoint operator depending on $u$. As a consequence, there exists a one parameter family $U(t)$ of unitary operators on $L^2_+(\T )$ such that
$$\forall t\in \R \ ,\ H_{u(t)}=U(t)H_{u(0)}U(t)^*\ .$$
In particular, $H_{u(t)}^2=U(t)H_{u(0)}^2U(t)^*\ .$ Notice that $H_u^2$ is a linear positive operator on $L^2_+(\T )$, and that
$$\mathscr FH_u^2\mathscr F^{-1}=\Gamma _{\hat u}\Gamma _{\hat u}^*\ ,$$
thus $H_u^2$ is a trace class operator as soon as $u\in H^{\frac 12}_+(\T )$, with
$${\rm Tr}(H_u^2)=\sum_{n=0}^\infty (1+n)|\hat v(n)|^2=\Vert v\Vert _{H^{\frac 12}}^2\ .$$
Consequently,  apart from $0$, the spectrum of $H_u^2$ is made of eigenvalues, which are conservation laws of \eqref{szego}.\\
In fact, a second Lax pair for \eqref{szego} holds \cite{GG2}, which concerns the operator $K_u:=S^*H_u=H_uS=H_{S^*u}$, where $S$ denotes the shift operator on $L^2_+(\T )$, namely  multiplication by ${\rm e}^{ix}$. Operator $K_u$ is also a Hankel operator, 
$$\mathscr FK_u\mathscr F^{-1}=\tilde \Gamma _{\hat u}\circ \mathcal C\ ,$$
where $\tilde \Gamma _\alpha $ denotes the shifted Hankel matrix $(\alpha _{n+p+1})_{n,p\ge 0}$.
Again, it is possible to prove that 
$$\forall t\in \R \ ,\ K_{u(t)}=V(t)K_{u(0)}V(t)^*\ ,$$
for some one parameter family $V(t)$ of unitary operators on $L^2_+(\T )$, and consequently that the eigenvalues of $K_u^2$ are conservation laws of \eqref{szego}. Denote by $(\rho_j^2)_{j\ge 1}$ the positive eigenvalues of $H_u^2$ and by $(\sigma_k^2)_{k\ge 1}$  the positive eigenvalues of $K_u^2$, so that the $\rho_j$'s --- resp. the $\sigma_k$'s --- are the singular values of $\Gamma_{\hat u}$ --- resp. of $\tilde \Gamma_{\hat u}$.  In view of the identity
$$K_u^2=H_u^2-(.\vert u)_{L^2}u$$
and of the min--max theorem, there holds the following interlacement property,
$$\rho_1\ge \sigma_1\ge \rho_2\ge \sigma_2\ge \dots $$
\subsection{The spectral transform} If $u$ belongs to a dense $G_\delta $ subset $H^{\frac 12}_{+,{\rm gen}}(\T )$ of $H^{\frac 12}_+(\T )$, one can establish -- see \cite{GG2} --- that 
$$\rho_1> \sigma_1> \rho_2> \sigma_2> \dots $$
We set 
$$s_{2j-1}=\rho_j\ ,\ s_{2k}=\sigma_k \ ,\ j,k\ge 1\ .$$
The $s_r$'s are called the singular values of the pair $(H_u,K_u)$. Of course 
$$\sum_{r=1}^\infty s_r^2={\rm Tr} (H_u^2)+{\rm Tr}(K_u^2)=\sum_{n=0}^\infty (1+2n)|\hat u(n)|^2<\infty \ .$$
Conversely, given a square summable --- strictly --- decreasing sequence $(s_r)_{r\ge 1}$ of  positive numbers, the set of $u\in H^{\frac 12}_+$ such that the $s_r$'s are the singular values of the pair $(H_u,K_u)$, in the above sense, is an infinite dimensional torus $\mathcal T((s_r)_{r\ge 1})$ \cite{GG2} of $H^{\frac 12}_+(\T )$. This torus is parametrised by the following explicit representation \cite{GG6}, where we classically identify functions of $L^2_+(\T)$ with holomorphic functions $u=u(z)$ on the unit disc such that
$$\sup_{r<1}\int_0^{2\pi}|u(r{\rm e}^{ix})|^2\, dx <\infty \ .$$
The current element of the infinite dimensional torus $\mathcal T((s_r)_{r\ge 1})$ is then given by 
\begin{equation}\label{uCN}
 u(z)=\lim_{N\to \infty}\langle \mathscr C_N(z)^{-1}({\bf 1}_N), {\bf 1}_N\rangle\  ,\ |z|<1\ ,\ {\bf 1}_N:=\left(\begin{array}{c}1\\
\vdots\\
1\end{array}\right)\in \C^N\ ,
\end{equation}
where 
\begin{equation}\label{CN}
\mathscr C_N(z):=\left (\frac{s_{2j-1}{\rm e}^{i\psi_{2j-1}}-z s_{2k}{\rm e}^{i\psi_{2k}}}{s_{2j-1}^2-s_{2k}^2}  \right )_{1\le j,k\le N}\ .
\end{equation}
and $(\psi_r)_{r\ge 1}\in \T^\infty $ is an arbitrary sequence of angles. Furthermore, the evolution of the new variables $(s_r,\psi_r)_{r\ge 1}$ through the dynamics of \eqref{szego} is given by 
$$\frac{ds_r}{dt}=0\ ,\ \frac{d\psi_r}{dt}=s_r^2\ ,\ r=1,2,\dots $$
{A} natural question is then the description of the regularity of $u$ in these new variables. A first type of answer to this question is provided by results due to Peller and Semmes --- see e.g. \cite{Pe}---, which characterise the Schatten classes 
$$\sum_{r\ge 1}s_r^p<\infty \ ,\ 0<p<\infty \ ,$$
in terms of the Besov spaces 
$$\sum_{j=0}^\infty 2^j \int_\T |\Delta _j u|^p \, dx<\infty \ ,$$
where $(\Delta_ju)_{j\ge 0}$ denotes the dyadic blocks of $u$. In particular, if $u$ is smooth, then $(s_r)_{r\ge 1}$ satisfies
$$\sum_{r=1}^\infty s_r^p<\infty \ ,\ \forall p<\infty \ .$$
However, the latter condition is far from being sufficient to control high regularity of $u$. In fact, Sobolev regularity $H^s$ for $s>\frac 12$ cannot be easily described by the variables $(s_r,\psi_r)_{r\ge 1}$, as shown by the following result.
\begin{theorem}[\cite{GG6}]\label{Ast}
There exists a dense $G_\delta $ subset of initial data in $$C^\infty_+(\T):=\bigcap_{s}H^s_+(\T )$$ such that the corresponding solutions of \eqref{szego} satisfies,
for every $s>\frac 12$,
\begin{eqnarray*}
\forall M\ge 1\ ,\ &&\limsup_{t\to \infty }\frac{\Vert u(t)\Vert_{H^s}}{|t|^M}=+\infty \\
&&\liminf_{t\to \infty }\Vert u(t)\Vert_{H^s}<\infty \ .
\end{eqnarray*}
\end{theorem}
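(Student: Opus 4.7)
The plan is to use the action--angle description $(s_r,\psi_r)_{r\ge 1}$ recalled above. Since the $s_r$ are conserved by the flow and $\psi_r(t)=\psi_r(0)+s_r^2 t\pmod{2\pi}$, the dynamics reduces to a linear translation on the infinite torus $\T^\infty$, and everything is encoded in the explicit map $\Phi:(s,\psi)\mapsto u$ given by \eqref{uCN}--\eqref{CN}. The theorem will follow from a Baire category argument on $\T^\infty$, once a carefully chosen sequence of actions has been frozen.

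The key preliminary step is the construction of a single rapidly decreasing sequence $(s_r^*)_{r\ge 1}$ with two competing features: (i) $\Phi(s^*,\psi)$ belongs to $C^\infty_+(\T)$ for $\psi$ in a large subset of $\T^\infty$, providing smooth initial data; (ii) infinitely many gaps $s_{2j-1}^2-s_{2k}^2$ are taken arbitrarily small. Such a small gap is precisely the resonant denominator of \eqref{CN}; when the angles $(\psi_{2j-1},\psi_{2k})$ enter a corresponding thin strip of $\T^2$, the inverse of $\mathscr C_N(z)$ develops very large entries, inflating high-frequency Fourier coefficients of $u$ and blowing up the $H^s$ norms for every $s>\tfrac 12$. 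A delicate quantitative calibration of the decay of $(s_r^*)$ against the gap sizes must ensure that, for every $M$, the spike amplitude triggered when an orbit enters such a strip at time $t$ can be made to exceed $|t|^M$. This wave-turbulence mechanism, alluded to in the abstract, is the main obstacle of the proof.

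With $(s_r^*)$ in hand, choose the frequency vector $\omega^*:=((s_r^*)^2)_{r\ge 1}$ rationally independent, so that every orbit of the translation $\psi\mapsto \psi+t\omega^*$ is dense in $\T^\infty$ by Kronecker--Weyl. For fixed $s>\tfrac 12$ and $M\ge 1$, the set
\[
U_{s,M}:=\bigcap _{T>0}\bigl\{\psi\in \T^\infty :\ \exists\, t>T,\ \Vert \Phi(s^*,\psi+t\omega^*)\Vert _{H^s}>|t|^M\bigr\}
\]
is a $G_\delta$: its inner sets are open by lower semicontinuity of $\Vert \cdot \Vert _{H^s}$ combined with continuity of the flow on bounded time intervals. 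Density follows by producing, in any prescribed neighbourhood of any $\psi^0$, a $\psi$ whose orbit enters a spike strip at some time $t>T$; this is granted by density of orbits together with the possibility, through (ii), to choose the strip as deep inside the spike region as needed. Intersecting over rational $s>\tfrac 12$ and integer $M$ gives a dense $G_\delta $ of angles satisfying the $\limsup$ conclusion for all $s$ and $M$ simultaneously.

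For the $\liminf$ assertion, one uses almost-periodicity of the linear translation on $\T^\infty$: a dense $G_\delta$ of angles have an orbit returning arbitrarily close to $\psi(0)$ along some sequence $t_n\to \infty$. At such points, pointwise continuity of $\Phi(s^*,\cdot)$ into $H^s$ at smooth configurations --- which can be extracted from the explicit formula \eqref{uCN} by a dominated-convergence argument on the truncations $\mathscr C_N(z)^{-1}$ --- yields $\Vert u(t_n)\Vert _{H^s}=O(1)$, and hence a finite $\liminf$. The Baire intersection of the two $G_\delta$ sets remains $G_\delta$ dense in $C^\infty_+(\T)$, which concludes. The decisive technical point is really the quantitative spike estimate in terms of the gap size: everything else is Baire--theoretic bookkeeping on $\T^\infty$.
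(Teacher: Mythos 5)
The paper does not prove Theorem \ref{Ast}; it cites it from \cite{GG6} and only uses its weaker consequence (unboundedness in $H^s$). So there is no "paper's own proof'' to compare against literally, but your proposal should still be measured against the statement it is supposed to establish, and there are several genuine gaps.

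First, and most seriously, your Baire argument lives on the wrong space. You freeze a single sequence $(s_r^*)$ and build a dense $G_\delta$ of angles inside $\T^\infty$, i.e.\ a dense $G_\delta$ subset of the single torus $\mathcal T((s_r^*)_{r\ge 1})$ for its $H^{\frac12}$ topology. But the theorem asserts the existence of a dense $G_\delta$ subset of the Fr\'echet space $C^\infty_+(\T)$. A fixed torus $\mathcal T((s_r^*)_{r\ge 1})$ is a compact, hence nowhere dense, subset of $C^\infty_+(\T)$, so no subset of it can be dense in $C^\infty_+(\T)$. The Baire argument has to run directly in $C^\infty_+(\T)$: for each $s$, $M$, $n$, one takes the set $V_{s,M,n}$ of smooth initial data whose trajectory satisfies $\|u(t)\|_{H^s}>t^M$ for some $t>n$; openness follows from well-posedness, and density must be shown by perturbing the singular values of an \emph{arbitrary} smooth $u_0$ to create suitable small gaps, not by working over a single frozen action sequence.

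Second, your argument for the $\liminf$ is in tension with your $\limsup$ mechanism. You invoke "pointwise continuity of $\Phi(s^*,\cdot)$ into $H^s$ at smooth configurations'' to conclude $\|u(t_n)\|_{H^s}=O(1)$ along return times. But Theorem \ref{meltingpot} (and Lemma \ref{alternative}) shows exactly the opposite for the small-gap tori you constructed: when the torus is $H^s$-unbounded, its non-$H^s$ points form a dense $G_\delta$, so arbitrarily close to any smooth point there are points outside $H^s$. The parametrisation $\Phi(s^*,\cdot)$ is therefore not continuous into $H^s$ at smooth configurations of such tori, and the dominated-convergence claim cannot go through. The $\liminf$ assertion needs a different mechanism; mere almost-periodicity in the $H^{\frac12}$ topology together with lower semicontinuity of the $H^s$ norm gives the inequality in the \emph{wrong} direction.

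Third, the quantitative core of the proof — calibrating the decay of $(s_r)$ against the gap sizes so that the spike amplitude beats $t^M$ for every $M$, while keeping the initial datum smooth — is declared as the "decisive technical point'' but not carried out. That is the substance of the result in \cite{GG6}; without an estimate of how small $s_{2j-1}-s_{2k}$ must be, relative to $s_{2j-1}$ and to the time needed for the angles to enter the resonant strip, the argument only gestures at the phenomenon. In short: the Baire bookkeeping is set up on the wrong space, the $\liminf$ step rests on a continuity claim contradicted by the paper's own Theorem \ref{meltingpot}, and the quantitative spike estimate is assumed rather than proved.
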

In other words, in the $(s_r,\psi_r)_{r\ge 1}$ representation, the size of the high Sobolev norms may strongly depend on the angles $(\psi_r)_{r\ge 1}$. 
The goal of this paper is to investigate this phenomenon in more detail.
\subsection{Overview of the results}
Our first result claims that  generic smooth functions $u$  are located on a  torus  $\mathcal T((s_r)_{r\ge 1})$ containing also very singular functions. 
\begin{theorem}\label{meltingpot}
There exists a dense $G_\delta $ subset $\mathscr G$ of $C^\infty_+(\T )$ such that every element $u$ of  $\mathscr G$ belongs to $H^{\frac 12}_{+, {\rm gen}}(\T)$, and the infinite dimensional torus $\mathcal T((s_r)_{r\ge 1})$ passing through $u$ has a dense $G_\delta $ subset --- for  the $H^{\frac 12}$ topology--- which is disjoint of $H^s$ for every $s>\frac 12$. 
 \end{theorem}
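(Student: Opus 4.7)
The plan is to combine Theorem~\ref{Ast} with a Baire category argument carried out on each torus. First I would form $\mathscr G$ as the intersection of three dense $G_\delta$'s in $C^\infty_+(\T)$: the set $\mathscr G_0$ produced by Theorem~\ref{Ast}, whose orbits exhibit super-polynomial $H^s$ growth along subsequences; $C^\infty_+\cap H^{\frac 12}_{+,{\rm gen}}$, dense $G_\delta$ in $C^\infty_+$ because $H^{\frac 12}_{+,{\rm gen}}$ is such in $H^{\frac 12}_+$; and a subset $\mathscr G_{\mathrm{ind}}$ ensuring that the singular values $(s_r^2)_{r\geq 1}$ are linearly independent over $\Q$. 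The last condition, itself dense $G_\delta$ by a standard perturbation argument, guarantees via Kronecker--Weyl that the Szeg\H o orbit $t\mapsto\vec\psi(0)+t(s_r^2)_{r\geq 1}$ equidistributes in $\T^\infty$.

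Fix $u\in\mathscr G$ and view $\mathcal T:=\mathcal T((s_r)_{r\geq 1})$ as the image of the parametrization $\Phi:\T^\infty\to H^{\frac 12}_+$ defined by \eqref{uCN}. Since $\Phi$ is continuous from the product topology on $\T^\infty$ to the $H^{\frac 12}$ topology and is a continuous bijection between compact Hausdorff spaces, it is a homeomorphism, and the two topologies therefore agree on $\mathcal T$. For every $s>\frac 12$ the functional
$$F_s(\vec\psi):=\|\Phi(\vec\psi)\|_{H^s}^2=\sum_{n\geq 0}(1+n)^{2s}|\widehat{\Phi(\vec\psi)}(n)|^2$$
is a monotone limit of continuous non-negative functions, hence lower semicontinuous on $\T^\infty$. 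So every $U_{s,M}:=\{\vec\psi:F_s(\vec\psi)>M\}$ is open, and the candidate exceptional set
$$\mathcal R:=\bigcap_{k\geq 1}\bigcap_{M\geq 1}U_{1/2+1/k,\,M}$$
is $G_\delta$ and automatically disjoint from every $H^s$ with $s>\frac 12$.

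By Baire's theorem it remains to show that each $U_{s,M}$ is dense in $\mathcal T$. Lower semicontinuity of $F_s$ yields the usual dichotomy: either some sublevel $\{F_s\leq M\}$ has non-empty interior, or $\{F_s=\infty\}$ is already dense $G_\delta$. Arguing by contradiction, if the first alternative held, the interior would contain a cylinder $V$ constraining only the first $K$ angles $\psi_1,\dots,\psi_K$; equidistribution of $\vec\psi(t)$ then forces $\|u(t)\|_{H^s}\leq\sqrt M$ on a positive-density set of times, which is compatible with the $\liminf$ assertion of Theorem~\ref{Ast} but must be reconciled with its super-polynomial growth statement. The strategy is then to exploit the residual freedom in the high-index angles $(\psi_r)_{r>K}$ inside $V$: tuning them in resonance with the small gaps $s_{2j-1}-s_{2k}$ appearing in the denominators of $\mathscr C_N(z)$ in \eqref{CN} should cause constructive interference in the Fourier coefficients $\hat u(n)$ and produce configurations in $V$ with $F_s$ arbitrarily large, contradicting the assumed bound on $V$.

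The main obstacle is making this last step rigorous: Theorem~\ref{Ast} alone only gives blow-up along a single orbit, while one needs to localize the underlying mechanism to an arbitrary cylinder of $\T^\infty$. The natural route is to import the constructive argument underlying Theorem~\ref{Ast}---where large $H^s$ norms are produced by resonant angle choices dictated by the small-gap asymptotics of $(s_r)$---and to show it carries through while prescribing any finite set of initial angles. This ``localized wave turbulence'' is precisely where the announced genericity phenomenon on colourful tori should play its role.
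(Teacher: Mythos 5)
Your framework is correct up to the last step: the intersection of the three dense $G_\delta$'s, the continuity of $\Phi:\T^\infty\to\mathcal T$ in $H^{\frac 12}$, the lower semicontinuity of $\|\cdot\|_{H^s}$, and the Baire dichotomy on the torus are all exactly what the paper uses (its Lemma~\ref{alternative}). But the final reduction to a contradiction is where you stop short, and the route you sketch -- tuning high-index angles inside the cylinder $V$ so as to produce constructive interference in $\hat u(n)$ -- is not the paper's mechanism, and you correctly flag that it is the hard unresolved step.

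The paper avoids any such construction. Suppose a cylinder $U\subset\mathcal T$, constraining only $\psi_1,\dots,\psi_N$, is contained in some sublevel set $\{\|\cdot\|_{H^s}\le\ell\}$. Because \eqref{szego} is globally well-posed on $H^s$ for $s>\frac12$, each time-$t$ flow map $\Phi_t$ is a homeomorphism of $\mathcal T$ (for the $H^{\frac12}$ topology) and maps bounded subsets of $H^s$ to bounded subsets of $H^s$. On the angle side $\Phi_t$ is the translation $\psi_r\mapsto\psi_r+ts_r^2$, and the $\Q$-linear independence of the $s_r^2$ makes the truncated orbit $t\mapsto(\psi_r^0+ts_r^2)_{r\le N}$ dense in $\T^N$. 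Hence \emph{every} point of $\mathcal T$ lies in some translated cylinder $\Phi_t(U)$, each of which is an open, $H^s$-bounded set. Compactness of $\mathcal T$ gives a finite subcover, so $\mathcal T$ is bounded in $H^s$. Since the orbit of $u$ stays on $\mathcal T$, this contradicts the unboundedness of $\|u(t)\|_{H^s}$ guaranteed by (the weak form of) Theorem~\ref{Ast}. The crucial ingredient you are missing is therefore not a wave-turbulence estimate localized to a cylinder, but rather the conjunction of $H^s$ well-posedness of the flow (to propagate the bound) with compactness of $\mathcal T$ (to make the bound global); the observation about a positive-density set of times is accurate but too weak -- one needs boundedness of the whole torus, not just along a large set of times.
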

 Theorem \ref{meltingpot} states that, on the tori $\mathcal T((s_r)_{r\ge 1})$ passing through generic smooth functions, the regularity changes dramatically from $C^\infty $ to the outside of $H^s$ for every $s>\frac 12$. Of course, this result can be seen as a  natural extension of Theorem \ref{Ast} recalled above, of which we use the weaker form that 
 tori $\mathcal T((s_r)_{r\ge 1})$ passing through generic smooth functions are unbounded in $H^s$ for every $s>\frac 12$. However, in order to find singular functions on these tori , we combine it with a structure property of these tori, which we think has its own interest.
 \begin{lemma}\label{alternative}
 Let  $s>\frac 12$ and let $(s_r)_{r\ge 1}$ be a square summable decreasing sequence of positive numbers such that the numbers $\{ s_r^2, r\ge 1 \}$ are linearly independent on $\Q $. Then we have the following alternative.
 \begin{itemize}
 \item Either $\mathcal T((s_r)_{r\ge 1})$ is  a bounded subset of $H^s$,
 \item Or $\mathcal T((s_r)_{r\ge 1})\setminus H^s$ is a dense $G_\delta $ subset of $\mathcal T((s_r)_{r\ge 1})$ for the $H^{\frac 12}$ topology.
 \end{itemize}
 \end{lemma}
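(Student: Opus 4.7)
I take as coordinates on $\mathcal T((s_r)_{r\ge 1})$ the angles $\psi=(\psi_r)_{r\ge 1}\in\mathbb T^\infty$ furnished by \eqref{uCN}--\eqref{CN}, which give a homeomorphism $\Phi:\psi\mapsto u_\psi$ from $\mathbb T^\infty$ (product topology) onto $\mathcal T((s_r))$ (with the $H^{\frac12}$ topology). Set
\[
F:\mathbb T^\infty\to[0,+\infty],\qquad F(\psi):=\|u_\psi\|_{H^s}^2=\sum_{n\ge 0}(1+n)^{2s}|\hat u_\psi(n)|^2,
\]
so that the alternative becomes: either $\sup_\psi F(\psi)<\infty$, or $\{F=\infty\}$ is a dense $G_\delta$ in $\mathbb T^\infty$. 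Continuity of $\Phi$ into $H^{\frac12}$ entails continuity of each Fourier coefficient $\psi\mapsto\hat u_\psi(n)$, so $F$ is a countable sum of continuous non\-negative functions, hence lower semicontinuous. In particular $\{F=\infty\}=\bigcap_{M\in\mathbb N}\{F>M\}$ is automatically a $G_\delta$ in $\mathbb T^\infty$, i.e., in $\mathcal T((s_r))$ for the $H^{\frac12}$ topology.

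\textbf{Dynamical input.} Two ingredients drive the proof. First, in the $\psi$-coordinates the Szeg\H o flow is the linear translation $\psi\mapsto\psi+t(s_r^2)_{r\ge 1}$, and global well-posedness of \eqref{szego} on $H^s_+$ for every $s\ge\frac12$ guarantees that this flow preserves the subset $H^s$; hence both $\{F<\infty\}$ and $\{F=\infty\}$ are invariant under translation by $t(s_r^2)$. Second, the $\mathbb Q$-linear independence of $\{s_r^2\}_{r\ge 1}$, combined with the continuous-time Kronecker theorem applied to every finite truncation $(s_1^2,\ldots,s_N^2)$, shows that every orbit $\{\psi+t(s_r^2):t\in\mathbb R\}$ is dense in $\mathbb T^\infty$.

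\textbf{Putting it together.} If there exists some $\psi^*$ with $F(\psi^*)=\infty$, the entire Szeg\H o orbit of $\psi^*$ is dense in $\mathbb T^\infty$ and is contained in $\{F=\infty\}$, so the latter is dense, establishing the second alternative. Otherwise $\mathcal T((s_r))\subset H^s$, and one must upgrade this pointwise inclusion into the uniform bound of the first alternative. Writing $\mathbb T^\infty=\bigcup_{M\in\mathbb N}\{F\le M\}$ and applying Baire's theorem on the compact metric space $\mathbb T^\infty$, some $\{F\le M_0\}$ has non-empty interior $V$. Using the density of Szeg\H o orbits to reach $V$ from any $\psi$, together with the rational structure of \eqref{uCN}--\eqref{CN}, one then propagates the local $H^s$-bound on $V$ to a uniform tail estimate on $\sum_n(1+n)^{2s}|\hat u_\psi(n)|^2$, which amounts to continuity of $\Phi$ into $H^s$ and therefore to compactness of $\mathcal T((s_r))$ in $H^s$.

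\textbf{Expected main obstacle.} The first direction is clean: Szeg\H o-invariance of $\{F=\infty\}$ combined with density of Szeg\H o orbits is essentially the whole argument. The delicate step is the converse, where one has to promote the pointwise assertion $\mathcal T((s_r))\subset H^s$ to uniform boundedness. Lower semicontinuity alone is not enough (a lower semicontinuous real-valued function on a compact space need not be bounded), so the argument must exploit the rigidity of the representation \eqref{uCN} — presumably by combining the Baire-category information on $V$ with flow-invariance to extract a $\psi$-uniform control of the high-frequency tail of $u_\psi$. This is the part I would expect to require the most care.
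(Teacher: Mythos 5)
Your first direction is correct and in fact slightly streamlines the paper's. You observe that $\{F=\infty\}$ is automatically a $G_\delta$ by lower semicontinuity, and that it is invariant under the Szeg\H{o} flow because well-posedness on $H^s$ together with time reversibility forces $\Phi_t(H^s\cap\mathcal T)=H^s\cap\mathcal T$; one singular point then carries a dense orbit inside $\{F=\infty\}$ by Kronecker and $\Q$-independence. The paper reaches the same conclusion but via the Baire dichotomy (either $\bigcup_\ell F_\ell$ has empty interior, or some $F_\ell$ has nonempty interior), without invoking flow-invariance of the singular set. Both are valid; yours is a tidy variant.

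The real issue is your second direction, which you yourself flag as the delicate step and then do not actually carry out. From ``some $\{F\le M_0\}$ has nonempty interior $V$'' you must pass to a global bound, and you gesture at ``the rational structure of \eqref{uCN}--\eqref{CN}'' and ``continuity of $\Phi$ into $H^s$,'' neither of which is the mechanism and the latter of which is not even implied by boundedness. The paper's actual argument is different and worth stating precisely: one takes an open cylinder $U\subset V$ depending on finitely many angles; by Kronecker density, every $v\in\mathcal T$ lies in some $\Phi_t(U)$; since $\Phi_t$ is a homeomorphism of $\mathcal T$ for the $H^{\frac12}$ topology, $\Phi_t(U)$ is open, and since the Szeg\H{o} flow is well-posed on $H^s$ (in particular maps $H^s$-bounded sets to $H^s$-bounded sets for fixed $t$), $\Phi_t(U)$ is bounded in $H^s$, hence contained in the \emph{interior} of some $F_m$. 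Thus $\mathcal T$ is covered by interiors of $F_m$'s, and compactness of $\mathcal T$ in $H^{\frac12}$ yields a finite subcover, i.e.\ boundedness in $H^s$. This chain of ideas --- openness of $\Phi_t(U)$ in $H^{\frac12}$, $H^s$-boundedness of $\Phi_t(U)$ via well-posedness, and the finite subcover --- is exactly what is missing from your sketch, so as written your proof has a genuine gap precisely at the step you suspected would need the most care.
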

The point of Theorem \ref{meltingpot} is that, even for fast decaying singular values $(s_r)$, the regularity of $u$ may be spoiled by the relative smallness of 
the gaps $s_r-s_{r+1}$ with respect to $s_r$. In fact,  if 
$$u_N(z)=\langle \mathscr C_N(z)^{-1}({\bf 1}_N),{\bf 1}_N\rangle \ ,\ \psi_r=0, r=1,2,\dots \ ,$$
with the notation introduced above, then, using the positivity property of the Hankel matrices $\Gamma_{\hat u_N}$ and $\tilde \Gamma_{\hat u_N}$
equivalent to $\psi_r=0$ for all $r$ --- see  \cite{GG3}, \cite{GP1}---, we prove in the appendix that 
\begin{equation}\label{estC1}
\Vert u_N\Vert_{C^1(\T)}\geq \sum_{j=1}^N\frac{s_ {2j-1}s_{2j}}{s_{2j-1}-s_{2j}}\ .
\end{equation}
It is then easy to find  fast decaying sequences $(s_r)$ such that the above right hand side tends to infinity as $N$ goes to infinity, which implies that $(u_N)$ is unbounded in $C^1(\T )$. However, at this stage we do not know how to conclude that $u$ is not in $C^1(\T)$.

Our two other results state some uniform analytic regularity for  tori $\mathcal T((s_r)_{r\ge 1})$
where the sequence $(s_r)_{r\ge 1}$ satisfies some specific property ruling out the phenomenon of small gaps.
\begin{theorem}\label{sous_geometrique}
For every $\rho >0$, there exists $\delta _0>0$ such that, for any $\delta \in (0,\delta _0)$, if
$$\forall r\ge 1\ ,\ s_{r+1}\leq \delta s_r\ ,$$
all functions $u\in \mathcal T((s_r)_{r\ge 1})$ are holomorphic and uniformly bounded  in the disc $|z| <1+\rho$. Consequently, for any initial datum corresponding to some of these functions, the solution of the cubic Szeg\H{o} equation \eqref{szego} is analytic in the disc of radius $1+\rho$ for all time, and is uniformly bounded in this disc. In particular, the trajectory is bounded in $C^\infty(\T )$.
\end{theorem}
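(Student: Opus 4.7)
The strategy is to establish, for $\delta$ small enough depending on $\rho$, a uniform bound
\[
\sup_{N\ge 1}\,\sup_{|z|\le 1+\rho}\,\sup_{(\psi_r)\in\T^\infty}\bigl|\langle\mathscr{C}_N(z)^{-1}\mathbf 1_N,\mathbf 1_N\rangle\bigr|\le K(\rho,(s_r))
\]
on the rational approximants $u_N(z):=\langle\mathscr{C}_N(z)^{-1}\mathbf 1_N,\mathbf 1_N\rangle$ of \eqref{uCN}. Combined with the pointwise convergence $u_N\to u$ on $\{|z|<1\}$ given by \eqref{uCN} and a Montel/Vitali argument, this will force $u$ to extend holomorphically and boundedly to $\{|z|<1+\rho\}$, with bound independent of $(\psi_r)$. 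The statement for the Szeg\H{o} flow then follows at once, since the flow preserves $(s_r)$ and only rotates the $\psi_r$.

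\textbf{Factorisation and Neumann expansion.} Setting $\alpha_j:=s_{2j-1}e^{i\psi_{2j-1}}$, $\beta_k:=s_{2k}e^{i\psi_{2k}}$, $D_\alpha:=\mathrm{diag}(\alpha_j)$, $D_\beta:=\mathrm{diag}(\beta_k)$ and the Cauchy-type matrix $Q_{jk}:=(s_{2j-1}^2-s_{2k}^2)^{-1}$, formula \eqref{CN} factorises as $\mathscr{C}_N(z)=A_N-zB_N$ with $A_N:=D_\alpha Q$ and $B_N:=QD_\beta$. The Neumann expansion then gives
\[
u_N(z)=\sum_{k\ge 0}z^k\,\langle (A_N^{-1}B_N)^k A_N^{-1}\mathbf 1_N,\mathbf 1_N\rangle,
\]
so the task reduces to producing a positive diagonal weight $W_N=\mathrm{diag}(w_j)$ (depending on $(s_r)$, independent of $N$ and of $(\psi_r)$) such that
\[
\|W_N^{-1}A_N^{-1}B_NW_N\|_{\ell^2\to\ell^2}\le C_1(\rho)\,\delta,\quad \|W_N^{-1}A_N^{-1}\mathbf 1_N\|_{\ell^2}\le C_2,\quad \|W_N\mathbf 1_N\|_{\ell^2}\le C_3.
\]
These three inequalities yield $|\hat u_N(k)|\le C_2C_3(C_1\delta)^k$ and hence $|u_N(z)|\le C_2C_3/(1-C_1\delta|z|)$ whenever $C_1\delta|z|<1$; choosing $\delta_0(\rho):=1/((1+\rho)C_1)$ then gives the required uniform bound on $\{|z|\le 1+\rho\}$.

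\textbf{Main obstacle.} The heart of the argument is the construction of $W_N$. Individually the operator norms of $D_\alpha^{-1}$, $Q$ and $Q^{-1}$ all diverge as $N\to\infty$ (the first because $s_{2N-1}\to 0$ geometrically, the others through the unbounded condition number of the Cauchy matrix $Q$), so submultiplicative estimates are useless and one must capture delicate cancellations. The hypothesis $s_{r+1}\le\delta s_r$ supplies both the geometric ratios $|\beta_k/\alpha_j|\le\delta^{2(k-j)+1}$ for $j\le k$ and the universal gap control $|s_{2j-1}^2-s_{2k}^2|\ge\max(s_{2j-1},s_{2k})^2(1-\delta^2)$. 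A natural candidate is a geometric weight $w_j=\mu^{-j}$ with $\mu\in(1,\delta^{-1})$, and one would check, using the explicit Cauchy inverse formula for $Q^{-1}$, that each entry of $W_N^{-1}A_N^{-1}B_NW_N$ is controlled by $\delta\,\mu^{-|j-k|}$ times a benign combinatorial factor; separating the estimate into the upper-triangular part $k\ge j$ (where $|\beta_k/\alpha_j|$ is small) and the lower-triangular part $k<j$ (where the apparent growth is absorbed by $Q^{-1}$) is the main combinatorial step.

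\textbf{Conclusion.} With these estimates in hand, each $u_N$ is analytic and uniformly bounded on $\{|z|\le 1+\rho\}$, so $(u_N)$ is a normal family on $\{|z|<1+\rho\}$ by Montel's theorem. The pointwise convergence $u_N\to u$ on $\{|z|<1\}$ then forces, by Vitali's theorem, uniform convergence on compacts of $\{|z|<1+\rho\}$; the limit $u$ is accordingly holomorphic and uniformly bounded in that disc. The invariance of $(s_r)$ under the Szeg\H{o} flow and the independence of the bound from $(\psi_r)$ yield the statement for all $t\in\R$.
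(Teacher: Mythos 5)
Your strategy---factor $\mathscr C_N(z)=\mathscr C_N(0)\bigl(I-z\mathscr C_N(0)^{-1}\dot{\mathscr C}_N\bigr)$, expand in a Neumann series, and control the resolvent via operator-norm estimates derived from the explicit inverse of the underlying Cauchy matrix---is precisely the structure of the paper's proof. The only genuine difference of setting is that the paper works directly with the $\ell^1\to\ell^1$ operator norm (maximum absolute column sum), which removes the need to introduce any diagonal weight $W_N$: the three bounds you are after collapse there to two, namely $\sum_{j,k}|(\mathscr C_N(0)^{-1})_{jk}|\le C_\delta s_1$ and $\|\mathscr C_N(0)^{-1}\dot{\mathscr C}_N\|_{\ell^1\to\ell^1}\le A\delta$, paired with the trivial observation that $\mathbf 1_N$ has $\ell^\infty$-norm $1$. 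Your weighted-$\ell^2$ variant with $w_j=\mu^{-j}$, $1<\mu<\delta^{-1}$, would also go through, but it is not simpler, and it requires the same entry-by-entry bounds.

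The real issue is that you have not carried out those bounds. The paper's entire technical content is Lemma~\ref{Estimates}, whose proof occupies a page of explicit computation: one writes $\mathscr C_N(0)=\mathrm{diag}(s_{2j-1}e^{i\psi_{2j-1}})\,\mathscr T$ with $\mathscr T$ a Cauchy matrix, uses the closed-form Cauchy inverse to write $(\mathscr C_N(0)^{-1})_{kj}$ as an explicit ratio of products, and then estimates it (eq.~\eqref{Estimate coeff}) to obtain a geometric decay $\lesssim s_{2j-1}\delta^{2|k-j|}$ after controlling infinitely many factors of the form $(1-\prod\e_r^2)/(1-\prod\e_r^2)$ by the Euler-type product $\prod_m(1-\delta^{4m})^{-1}$. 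A further case analysis ($k>\ell$, $k<\ell$, $k=\ell$) is needed to bound $\mathscr C_N(0)^{-1}\dot{\mathscr C}_N$ and extract the factor $\delta$ in front. You explicitly defer all of this (\emph{``one would check\dots''}, \emph{``is the main combinatorial step''}); as a result, the heart of the argument---and the only place where the hypothesis $s_{r+1}\le\delta s_r$ is actually used quantitatively---is missing. Everything surrounding it (the factorisation, the reduction to resolvent bounds, the Montel/Vitali passage to the limit $u_N\to u$, and the deduction of the dynamical consequence from invariance of the $s_r$'s) is correct, but these are the easy parts.
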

Theorem \ref{sous_geometrique} applies in particular to geometric sequences $s_r={\rm e}^{-rh}$ for $h>0$ large enough. Our last result explores in more detail the case of geometric sequences $s_r={\rm e}^{-rh}$, where $h>0$ is arbitrary. In this case, we still obtain some uniform analytic regularity, but with a constraint on the angles $\psi_r$.
\begin{theorem}\label{totally_geometric}
Let $h>0$ and $\theta \in \R$. Assume $(s_r)$ is given by $s_r={\rm e}^{-rh}$ and $(\psi_r)$ by $\psi_r=r\theta h$. Then there exists $\rho>0$ such that the corresponding elements of  $\mathcal T((s_r)_{r\ge 1})$ are holomorphic and uniformly bounded in the disc $|z| <1+\rho$. 
\end{theorem}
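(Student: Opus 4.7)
The plan is to exploit the ``complete geometry'' of the data. Set $w := e^{-h(1-i\theta)}$ (so $|w| = e^{-h} < 1$) and $q := |w|^2 = e^{-2h} \in (0,1)$; then $s_r e^{i\psi_r} = w^r$ and $s_r^2 = q^r$. Substituting into \eqref{CN} and factoring out a diagonal,
\[
\mathscr{C}_N(z) = D_N\, T_N(z),\qquad (D_N)_{jj} = \bar{w}^{-(2j-1)},\qquad T_N(z)_{jk} = \frac{1 - z w^{2(k-j)+1}}{1 - q^{2(k-j)+1}},
\]
so $T_N(z)$ is the $N\times N$ truncation of a bi-infinite Toeplitz matrix with entries $f_z(m)$ depending only on $m = k-j$. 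Consequently
\[
u_N(z) = \bigl\langle T_N(z)^{-1}\eta_N,\, \mathbf{1}_N\bigr\rangle,\qquad (\eta_N)_j = \bar{w}^{2j-1},
\]
with $\|\eta_N\|_{\ell^1} \le |w|/(1-q)$ uniformly in $N$. Thus by $\ell^1$--$\ell^\infty$ duality, the theorem reduces to a uniform bound $\|T_N(z)^{-1}\eta_N\|_{\ell^1} \le C$ for $N \ge 1$ and $|z| \le 1 + \rho$.

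Next I would identify in closed form the limiting Toeplitz symbol $\phi_z(\zeta) := \sum_{m\in\Z} f_z(m)\zeta^m$. Expanding $1/(1 - q^{2m+1})$ as a geometric series in $q$ for $m \ge 0$ and in $q^{-1}$ for $m < 0$, then reorganising the double sums, yields a partial-fraction representation exhibiting $\phi_z$ as a meromorphic function on $\C^*$ whose poles lie on two geometric lattices, of the form $\{q^{2n}\}_{n\in\Z}$ (from the $z$-independent part) and $\{q^{2n}/w^2\}_{n\in\Z}$ (from the $z$-dependent part), of moduli $q^{2n}$ and $q^{2n-1}$ respectively. On the unit circle the only singularity is a simple pole at $\zeta = 1$ with residue $-1$, coming from the asymptotic $f_z(m) \to 1$ as $m \to +\infty$; no other pole has modulus one. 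Equivalently, $\phi_z$ is expressible in terms of Jacobi theta functions of nome $q$, which is the role played by the elliptic-function discussions credited in the acknowledgement.

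The crux is then a Wiener--Hopf factorisation $\phi_z(\zeta) = (1-\zeta)^{-1}\, \tilde\phi_z^{(-)}(\zeta)\, \tilde\phi_z^{(+)}(\zeta)$, extracting the singular part explicitly, with $\tilde\phi_z^{(\pm)}$ analytic, bounded, and bounded away from $0$ on neighbourhoods of the exterior resp.\ interior of $\overline{\D}$, depending holomorphically on $z$ for $|z| \le 1+\rho$. The Toeplitz operator with symbol $1/(1-\zeta)$ corresponds to the upper-triangular matrix of $1$'s, whose finite sections are inverted by the forward-difference operator with $\ell^1$-operator-norm $\le 2$. The theta-function form of the residual $\tilde\phi_z$ allows one to locate its zeros and verify that none lies on the unit circle, with winding number $0$, uniformly for $z$ in the target region. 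Continuity in $z$ and compactness reduce this verification to a check at $z = 0$, where $\phi_0$ is directly computable. Standard Gohberg--Feldman theory then transfers the factorisation bound to $T_N(z)^{-1}$, giving the required uniform $\ell^1$-estimate, and passage to the limit $N \to \infty$ yields $u$ holomorphic and uniformly bounded in $|z| < 1 + \rho$.

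The main obstacle is the Wiener--Hopf factorisation with uniform control in $z$: the pole of $\phi_z$ on the unit circle prevents a direct appeal to the Wiener algebra theorem, so the singular part must be separated by hand, and the theta-function identities are essential in producing factors $\tilde\phi_z^{(\pm)}$ that depend holomorphically on $z$ and remain nonvanishing on the unit circle for every $h > 0$ and $\theta \in \R$.
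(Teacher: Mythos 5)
Your strategic framework coincides with the paper's: reduce the Cauchy matrices to Toeplitz structure, appeal to stability of truncated Toeplitz operators, and verify that the limiting symbol has no zeros on the circle and winding number zero. The key divergence — and the source of a genuine gap — is that you work with the symbol at radius $r=1$, where it has a pole at $\zeta=1$, whereas the paper introduces an auxiliary radius $r$ with $|\omega|^2<r<1$ and rewrites
$$u_N(z)=\bigl\langle T_{N,r}(z)^{-1}\bigl(r^{-j}\bar\omega^{\,2j-1}\bigr)_{j},\ (r^k)_k\bigr\rangle,$$
which is an exact algebraic identity (conjugation by the diagonal $\operatorname{diag}(r^j)$). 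Since $|\omega|^2<r<1$, the dilated symbol $\Phi(z,r\zeta)=F_\gamma(r\zeta)-z\omega F_\gamma(r\zeta\omega^2)$ has exponentially decaying Laurent coefficients, hence is analytic near $\T$, and Baxter's theorem (stability $\Leftrightarrow$ invertibility $\Leftrightarrow$ no zeros and index $0$) applies directly. This one rescaling dissolves the pole difficulty you are trying to handle by hand.

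Your version has concrete gaps. First, the symbol $\phi_z$ with $f_z(m)\to 1$ as $m\to+\infty$ is not in $L^\infty(\T)$, so $T(\phi_z)$ is not a bounded operator on $\ell^2$, and Baxter's theorem and Gohberg--Feldman theory do not apply; the "separation of the singular part by hand" is left entirely unexplained. Second, even granting a factorisation $\phi_z=(1-\zeta)^{-1}\tilde\phi_z^{(-)}\tilde\phi_z^{(+)}$, products of Toeplitz operators are not Toeplitz and, crucially, finite truncations of products are not products of truncations: the Widom-type correction terms (Hankel products) must be controlled, and with a factor $(1-\zeta)^{-1}\notin H^\infty$ these corrections are not covered by the standard machinery. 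Third, the verification that the regular factor is nonvanishing and has index $0$, uniformly for $|z|\le 1+\rho$, is asserted via theta-function identities rather than proved; the paper devotes Lemma \ref{NonVanishing} (a Poisson-summation argument giving $\gamma^{1/2}\max_{|\zeta|=\gamma}|F_\gamma|<\min_{|\zeta|=1}|F_\gamma|$) and Lemma \ref{index} (a functional-equation and index-counting argument for $F_\gamma$) to exactly this, and notes explicitly that while one \emph{could} reduce to properties of the Weierstrass $\mathfrak{P}$ function (Remark \ref{elliptic}), it instead gives an elementary self-contained proof. In short: right skeleton, but the missing $r$-dilation is the device that makes the whole thing rigorous, and the nonvanishing/index computation is the real analytic content that still needs to be carried out.
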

We do not know whether or not geometric tori are embedded into the space of analytic functions on $\T $. What we are able to prove  is that, for transcendental $\gamma$ , we have the following alternative, 
 \begin{itemize}
 \item Either there exists $\rho>0$ such that every element of $\mathcal T((\gamma^r)_{r\ge 1})$ is holomorphic on the disc $|z|<1+\rho $, with a uniform bound,
 \item Or  the non-analytic elements of $\mathcal T((\gamma^r)_{r\ge 1})$ form a dense $G_\delta $ subset of $\mathcal T((\gamma^r)_{r\ge 1})$ for the $H^{\frac 12}$ topology.
 \end{itemize} {This is a special case of an extension of Lemma \ref{alternative} to analytic  regularity (see Lemma \ref{alternativeanalytic}).}

\subsection{Open problems}
In view of the above theorems, the most natural open question is certainly to decide whether Theorem  \ref{sous_geometrique} can be generalised to any parameter $\delta <1$. In particular, {as we questioned above, if $0<\gamma <1$, is it true that the infinite dimensional torus $\mathcal T((\gamma ^r)_{r\ge 1})$} is included into the space of analytic functions on $\T $ ? 

Another question connected to Theorem \ref{meltingpot} relies on estimate \eqref{estC1}. Assuming that 
$$\sum_{j=1}^\infty \frac{s_{2j-1}s_{2j}}{s_{2j-1}-s_{2j}}=\infty \ ,$$
can one infer that the function $u\in \mathcal T((s_r)_{r\ge 1})$ characterised by $\psi_r=0, r=1,2,\dots \ ,$ is not $C^1$ on $\T $ ? In view of Lemma \ref{alternative}, this would imply, if moreover the $s_r^2$ are linearly independent on $\Q $, that most of the points on this torus would be singular --- say, not in $H^2$. Then it would be interesting to draw the consequences of this property for long term behaviour of solutions of the cubic Szeg\H{o} equation on this torus.
\subsection{Organisation of the paper}
The proof of Theorem \ref{meltingpot} is provided in Section \ref{colourful} after reducing to Lemma \ref{alternative} and Theorem \ref{Ast}. The proof of Lemma \ref{alternative} combines a Baire category argument and some elementary ergodic argument for the cubic Szeg\H{o} flow.
Section \ref{delta} is devoted to the proof of Theorem \ref{sous_geometrique}, which is based on brute force estimates on matrices $\mathscr C_N(z)$. In Section \ref{gamma}, we prove Theorem \ref{totally_geometric} by a different approach relying on the theory of Toeplitz operators and a theorem by Baxter which reduces our analysis to proving that the restriction to  $\T $ of a meromorphic function given by an explicit  series, has no zero and has index $0$, which can be realised using  some elementary complex analysis and the Poisson summation formula.  Finally, the estimate \eqref{estC1} is derived in Appendix \ref{C1} from an explicit calculation using Cauchy matrices, in the spirit of \cite{GG6} and of \cite{GP2}.
\section{The melting pot property}\label{colourful}
In this section, we prove Theorem \ref{meltingpot}. First we reduce the proof to Lemma \ref{alternative} by the following classical argument.
\begin{lemma}
The set of $u\in C^\infty_+(\T )\cap H^{\frac 12}_{+{\rm gen}}(\T )$  such that the squares $s_r(u)^2, r\ge 1$ of the  singular values $s_r(u)$ are linearly independent on $\Q$, is a dense $G_\delta $ subset of $C^\infty_+(\T )$. 
\end{lemma}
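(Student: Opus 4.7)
My plan is to apply the Baire category theorem in the Fréchet space $C^\infty_+(\T)$. For each integer $n\ge 1$, let $A_n\subset C^\infty_+(\T)$ denote the set of $u$ on which the first $n$ interlacement inequalities $s_1(u)>s_2(u)>\cdots>s_n(u)>0$ are strict, and, enumerating the countable family of nonzero rational tuples $\mathbf q=(q_1,\dots,q_k)\in\Q^k\setminus\{0\}$, set
\[ B_{\mathbf q}:=\Bigl\{u\in A_k:\sum_{j=1}^k q_j\,s_j(u)^2\ne 0\Bigr\}. \]
By Weyl's inequalities applied to $H_u$ and $K_u$, the singular values depend continuously on $u\in H^{\frac 12}_+$; combined with continuity of the embedding $C^\infty_+\hookrightarrow H^{\frac 12}_+$, each $A_n$ and each $B_{\mathbf q}$ is open in $C^\infty_+(\T)$. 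The desired set is $\bigcap_n A_n\cap\bigcap_{\mathbf q} B_{\mathbf q}$, hence $G_\delta$. Since $C^\infty_+(\T)$ is Fréchet and therefore Baire, it remains to prove density of each factor in $C^\infty_+(\T)$.

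The main obstacle is that density of $\bigcap_n A_n=H^{\frac 12}_{+,{\rm gen}}(\T)$ in $H^{\frac 12}_+(\T)$, known from \cite{GG2}, does not a priori transfer to the stronger Fréchet topology of $C^\infty_+(\T)$. I would bridge the gap with a finite-dimensional perturbation argument. Given $u_0\in C^\infty_+$ and any Fréchet neighborhood $\mathcal U$ of $u_0$, I pick a finite-dimensional subspace $V$ of trigonometric polynomials in $L^2_+(\T)$ with $u_0+tV\subset\mathcal U$ for $t$ small. For $A_n$: standard real-analytic perturbation theory for self-adjoint operators $H_u^2,K_u^2$ (which depend analytically on $u$) shows that any repeated eigenvalue of $H_{u_0}^2$ or $K_{u_0}^2$ splits for generic $v\in V$, provided $V$ is sufficiently rich (for instance, contains enough monomials $\mathrm e^{ikx}$). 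The set of $v\in V$ failing $A_n$ is then a proper real-analytic subvariety of $V$ (the zero locus of finitely many nontrivial analytic gap-functions $s_r-s_{r+1}$), hence has empty interior, and arbitrarily small $v\in V$ land in $A_n$.

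For $B_{\mathbf q}$: on the open set $A_k\cap C^\infty_+$, the map $\Phi_{\mathbf q}(u):=\sum_{j=1}^k q_j s_j(u)^2$ is real-analytic, since the $s_j(u)^2$ are there simple eigenvalues of $H_u^2$ and $K_u^2$ depending analytically on $u$. I would exploit the explicit torus parametrisation \eqref{uCN}: specialising to sequences $(s_r)$ vanishing for $r>2N$ yields smooth rational elements of $\mathcal T((s_r))$ realising any prescribed strictly decreasing finite profile of actions, and varying one action $s_\ell^2$ along a real-analytic one-parameter family in $C^\infty_+$ while fixing the others makes $\Phi_{\mathbf q}$ an affine, nonconstant function of that parameter whenever $q_\ell\ne 0$. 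Hence $\Phi_{\mathbf q}$ is not identically zero on any neighborhood in $A_k\cap C^\infty_+$, its zero set is nowhere dense there, and combining this with density of $A_k\cap C^\infty_+$ in $C^\infty_+$ via Baire yields the claim.
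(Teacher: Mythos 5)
Your overall Baire-category strategy (open sets expressing finitely many conditions, then intersect) matches the paper's, which defines open sets $\mathcal O_N$ requiring both strict interlacement through rank $N$ and the nonvanishing of all integer combinations with coefficients in $[-N,N]$. Your treatment of density of $B_{\mathbf q}$ via the explicit torus parametrisation restricted to finite spectral profiles is essentially the paper's device: approximate by rational functions and invoke the inverse spectral theorem of \cite{GG2} to move the finitely many singular values freely.

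Where your proposal has a gap is the density of $A_n$. You claim the set of $v\in V$ for which some coincidence $s_r(u_0+v)=s_{r+1}(u_0+v)$ persists is a proper real-analytic subvariety, being ``the zero locus of finitely many nontrivial analytic gap-functions $s_r-s_{r+1}$.'' But the individual singular values $s_r(u)$ are \emph{not} real-analytic in $u$ at the points where they are degenerate --- that is precisely the locus you are trying to rule out. They are merely Lipschitz there (branches of algebraic functions collide), so the gap functions are not analytic on all of $V$ and their zero set need not be a subvariety; the reasoning is circular. Moreover, the strictness conditions compare eigenvalues of \emph{two different} operators $H_u^2$ and $K_u^2$, which standard one-operator splitting arguments do not address directly. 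The paper avoids all of this by first approximating $u_0$ in the Fr\'echet topology of $C^\infty_+(\T)$ by a rational function (for which $H_u$ and $K_u$ have finite rank) and then using the inverse spectral theorem to prescribe the finite list of singular values arbitrarily close to the original ones, simultaneously enforcing strict interlacement and rational independence. In short: replace your perturbation-theoretic splitting argument by a perturbation in the spectral data itself, which is exactly what the surjectivity of the spectral map for rational functions buys you.
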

\begin {proof}
From the proof of \cite{GG2}, Lemma 7, we already know that $C^\infty_+(\T )\cap H^{\frac 12}_{+{\rm gen}}(\T )$ is a dense $G_\delta $ subset of $C^\infty_+(\T)$. In fact, we can slightly modify the proof as follows. For every $N$, consider the open subset
$\mathcal O_N$ made of functions  $u\in C^\infty_+(\T )$ such that the first singular values of $H_u$ and $K_u$ satisfy 
$$\rho_1(u)>\sigma_1(u)>\rho_2(u)>\sigma_2(u)>\dots \rho_N(u)>\sigma_N(u)\ ,$$
and such that any non trivial linear combination of $$\rho_1(u)^2,\sigma_1(u)^2,\rho_2(u)^2,\sigma_2(u)^2,\dots \rho_N(u)^2,\sigma_N(u)^2$$ with integer coefficients in $[-N,N]$, is not zero. Approximating elements of $C^\infty_+(\T )$ by rational functions, and using the inverse spectral theorem of \cite{GG2} for rational functions, we easily obtain that $\mathcal O_N$ is dense. The conclusion follows from Baire's theorem. 
\end{proof}
Intersecting the dense $G_\delta $ subset of $C^\infty_+(\T )$ provided by this lemma with the one provided by Theorem \ref{Ast} --- or its weaker form, saying that the corresponding Szeg\H{o} trajectories are unbounded in every $H^s$, $s>\frac 12$, --- we observe that Theorem \ref{meltingpot} is a consequence of Lemma 
\ref{alternative}, which we restate for the convenience of the reader.
\begin{lemma}
 Let  $(s_r)_{r\ge 1}$ be a square summable decreasing sequence of positive numbers such that the numbers $\{ s_r^2, r\ge 1 \}$ are linearly independent on $\Q $ and let $s>\frac 12$. Then we have the following alternative.
 \begin{itemize}
 \item Either $\mathcal T((s_r)_{r\ge 1})$ is  a bounded subset of $H^s$,
 \item Or $\mathcal T((s_r)_{r\ge 1})\setminus H^s$ is a dense $G_\delta $ subset of $\mathcal T((s_r)_{r\ge 1})$ for the $H^{\frac 12}$ topology.
 \end{itemize}
 \end{lemma}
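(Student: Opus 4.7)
My plan is to establish the dichotomy by showing that if $\mathcal T := \mathcal T((s_r)_{r\ge 1})$ is not $H^s$-bounded then $\mathcal T\setminus H^s$ is a dense $G_\delta$ in $\mathcal T$ for the $H^{\frac 12}$ topology. The $G_\delta$ structure is free: the squared $H^s$ norm $u\mapsto \sum_{n\ge 0}(1+n)^{2s}|\hat u(n)|^2$ is lower semi-continuous on $H^{\frac 12}_+(\T)$, being a supremum over $N$ of its continuous partial sums, so each set $\{u\in \mathcal T:\|u\|_{H^s}>M\}$ is $H^{\frac 12}$-open in $\mathcal T$, and $\mathcal T\setminus H^s=\bigcap_{M\in \N}\{u\in \mathcal T:\|u\|_{H^s}>M\}$ is $G_\delta$; no hypothesis on $(s_r)$ is used here.

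For density I argue by contradiction. Assume $\mathcal T$ is not $H^s$-bounded and that $\mathcal T\setminus H^s$ is not $H^{\frac 12}$-dense in $\mathcal T$. Then for some $M>0$ the $H^{\frac 12}$-closed set $A_M:=\{u\in\mathcal T:\|u\|_{H^s}\le M\}$ has non-empty $H^{\frac 12}$-interior $U$. My aim is to upgrade $U$ into a uniform $H^s$-bound on all of $\mathcal T$, which will contradict the assumption. Three structural facts make this possible: (a) $\mathcal T$ is $H^{\frac 12}$-compact, since the parametrization \eqref{uCN} defines a continuous map from $\mathbb T^\infty$ (product topology, compact) into $H^{\frac 12}_+(\T)$, continuity being controlled uniformly in $N$ by the conservation $\|u\|_{H^{\frac 12}}^2=\sum_r s_r^2$ on $\mathcal T$; (b) the Szeg\H o flow $\Phi_t$ preserves $\mathcal T$ (the $s_r$'s being integrals of motion) and, in the angle coordinates, acts as the translation $(\psi_r)\mapsto(\psi_r+s_r^2 t)$; (c) by Kronecker--Weyl, the $\Q$-linear independence of $\{s_r^2\}$ forces this translation to be minimal on $\mathbb T^\infty$ for the product topology, so $\Phi_t$ is minimal on $\mathcal T$ for the $H^{\frac 12}$-topology. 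This last point is the ``elementary ergodic argument''.

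From (a)--(c), the open $\Phi_t$-invariant set $\bigcup_{t\in\R}\Phi_t(U)$ must be all of $\mathcal T$ by minimality, and $H^{\frac 12}$-compactness extracts a finite subcover $\mathcal T=\bigcup_{i=1}^N\Phi_{t_i}(U)$. Well-posedness of the Szeg\H o equation in $H^s$ (from \cite{GG1}) then provides, for each $t_i$, a constant $M_i$ such that $\Phi_{t_i}(A_M)\subset\{u:\|u\|_{H^s}\le M_i\}$, the bound depending only on $M$, $|t_i|$, and the fixed $H^{\frac 12}$-radius of $\mathcal T$. Therefore $\mathcal T\subset\{u:\|u\|_{H^s}\le \max_{i\le N}M_i\}$ is $H^s$-bounded, the desired contradiction.

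The main obstacle is the interaction of two topologies: Baire-type and finite-subcover arguments live naturally in the compact $H^{\frac 12}$-topology, whereas the conclusion to be reached is about the finer $H^s$-topology. The bridge is the $H^s$ well-posedness of the Szeg\H o equation, which transfers the local $H^s$-control available on $U$ into a uniform $H^s$-bound on $\mathcal T$ through finitely many Szeg\H o time-$t_i$ maps. A secondary technical point is the continuity of the parametrization \eqref{uCN} from $\mathbb T^\infty$ to $H^{\frac 12}$, but this should follow from uniform-in-$N$ tail estimates on the approximants $u_N$ underwritten by the conservation of $\|\cdot\|_{H^{\frac 12}}^2$.
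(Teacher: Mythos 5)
Your proof is correct and follows essentially the same route as the paper: lower semi-continuity of the $H^s$ norm gives the $G_\delta$ structure via the closed sets $F_\ell=\{v:\|v\|_{H^s}\le\ell\}$, and the Baire/compactness argument combined with the density of the Kronecker orbit (given the $\Q$-linear independence of the $s_r^2$) and $H^s$ well-posedness of the flow upgrades a nonempty interior of some $F_\ell$ to a global $H^s$ bound on the torus. The only cosmetic differences are that you argue the density half by contradiction and invoke minimality on $\mathbb T^\infty$ directly rather than density of the projected trajectory on $\mathbb T^N$, but these are equivalent formulations of the same argument.
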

\begin{proof}
Recall \cite{GG2,GG6} that, for the $H^{\frac 12}$ topology,  $\mathcal T((s_r)_{r\ge 1})$ is homeomorphic to the infinite dimensional tori $\T^\infty $, endowed with the product topology,  through the parametrisation given by 
\eqref{uCN} and \eqref{CN}. In particular, it is a compact metrizable space. For every $s>\frac 12$, the function
$$\Vert v\Vert_{H^s}=\left (\sum_{n=0}^\infty (1+n)^{2s}|\hat v(n)|^2   \right )^{\frac 12}$$
is lower semi--continuous on $\mathcal T((s_r)_{r\ge 1})$. For every positive integer $\ell $, consider
$$F_\ell =\{ v\in \mathcal T((s_r)_{r\ge 1}): \Vert v\Vert_{H^s}\leq \ell \}\ .$$
Then $F_\ell $ is a closed subset of $\mathcal T((s_r)_{r\ge 1})$, and the complement of the union of the $F_\ell $ is precisely $\mathcal T((s_r)_{r\ge 1})\setminus H^s$. Hence, by the Baire theorem, either this set is a dense $G_\delta $ subset of $\mathcal T((s_r)_{r\ge 1})$, or there exists $\ell \ge 1$ such that
$F_\ell $ has a nonempty interior. Assume that some $F_\ell $ has a nonempty interior, and let us show that $\mathcal T((s_r)_{r\ge 1})$ is a bounded subset of $H^s$. Let $(\psi_r^0)_{r\ge 1}\in \T^\infty $ such that the corresponding point $v^0$ in $\mathcal T((s_r)_{r\ge 1})$ lies in the interior of $F_\ell $. In view of the product topology on $\T ^\infty $, there exists some integer $N\ge 1$ and some $\e >0$ such that all the elements of $\mathcal T((s_r)_{r\ge 1})$ corresponding to 
$$\psi_r\in ]\psi_r^0-\e ,\psi_r^0+\e [\ ,\ r=1,\dots ,N\ ,$$
form an open set $U$ contained into $F_\ell $. At this stage we appeal to the number theoretic assumption on the $s_r^2$, which we use classically under the form that
the trajectory
$$\{ (\psi_r^0+ts_r^2)_{r=1,\dots ,N}\ ,\ t\in \R\}$$
is dense into the torus $\T ^N$. Since, as recalled in the introduction,  this trajectory is precisely the projection of the trajectory of the cubic Szeg\H{o} flow $\Phi_t$ on the first $N$ components, we infer that  every element of $\mathcal T((s_r)_{r\ge 1})$ is contained into some open set $\Phi_t(U)$. Since the cubic Szeg\H{o} equation is wellposed on $H^s$ \cite{GG1}, we infer that $\mathcal T((s_r)_{r\ge 1})$ is covered by the union of the interiors of the $F_m$ for $m\geq 1$. By compactness, it is covered by a finite union, which precisely means that $\mathcal T((s_r)_{r\ge 1})$ is bounded in $H^s$.
\end{proof}
 As stated in the introduction for geometric sequences, {the following} analogous result holds in the analytic setting.
\begin{lemma}\label{alternativeanalytic}
Let  $(s_r)_{r\ge 1}$ be a square summable decreasing sequence of positive numbers such that the numbers $\{ s_r^2, r\ge 1 \}$ are linearly independent on $\Q $. Then we have the following alternative.
 \begin{itemize}
 \item Either there exists $\rho>0$ such that every element of $\mathcal T((s_r)_{r\ge 1})$ is holomorphic on the disc $|z|<1+\rho $, with a uniform bound,
 \item Or  the non-analytic elements of $\mathcal T((s_r)_{r\ge 1})$ form a dense $G_\delta $ subset of $\mathcal T((s_r)_{r\ge 1})$ for the $H^{\frac 12}$ topology.
 \end{itemize}
\end{lemma}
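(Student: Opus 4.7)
The plan is to transpose the Baire category and ergodicity argument of Lemma \ref{alternative} to the analytic setting, replacing the $H^s$ sublevel sets by closed subsets of $\mathcal T((s_r)_{r\ge 1})$ that encode quantitative analytic regularity through Fourier decay. For integers $k, \ell \ge 1$, set
$$B_{k, \ell} := \{v \in \mathcal T((s_r)_{r\ge 1}) : |\hat v(n)| \le \ell\,(1 + 1/k)^{-n} \text{ for every } n \ge 0\}.$$
Each $B_{k, \ell}$ is closed in $\mathcal T((s_r)_{r\ge 1})$ for the $H^{\frac 12}$ topology, since every Fourier coefficient $v \mapsto \hat v(n)$ is continuous on this torus (indeed continuous in the angles through \eqref{uCN}--\eqref{CN}). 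By Cauchy's estimates, an element $v \in \mathcal T((s_r)_{r\ge 1})$ extends holomorphically with a uniform bound to some disc $|z| < 1+\rho$, $\rho > 0$, if and only if $v$ lies in some $B_{k, \ell}$. The complement of $\bigcup_{k, \ell \ge 1} B_{k, \ell}$ is therefore exactly the set of non-analytic elements of $\mathcal T((s_r)_{r\ge 1})$, and Baire's theorem on the compact metrisable space $\mathcal T((s_r)_{r\ge 1})$ yields the dichotomy: either this complement is a dense $G_\delta$, which is the second alternative, or some $B_{k_0, \ell_0}$ has nonempty interior.

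Assume the latter. Pick $v^0$ in the interior of $B_{k_0, \ell_0}$ with angle sequence $(\psi_r^0)_{r\ge 1}$. Since $\mathcal T((s_r)_{r\ge 1})$ is homeomorphic to $\T^\infty$ with its product topology, there exist $N\ge 1$ and $\e > 0$ such that
$$U := \{v \in \mathcal T((s_r)_{r\ge 1}) : |\psi_r(v) - \psi_r^0| < \e,\ r = 1,\dots,N\}$$
is contained in $B_{k_0, \ell_0}$. Exactly as in the proof of Lemma \ref{alternative}, the $\Q$-linear independence of $(s_r^2)_{r\ge 1}$ makes the orbit $\{(\psi_r^0 + t s_r^2)_{r=1}^N : t \in \R\}$ dense in $\T^N$, while the Szeg\H{o} flow $\Phi_t$ acts on $\mathcal T((s_r)_{r\ge 1})$ precisely as the shift $\psi_r \mapsto \psi_r + t s_r^2$. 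Hence every element of $\mathcal T((s_r)_{r\ge 1})$ lies in some $\Phi_t(U)$, and since each $\Phi_t(U)$ is open in the compact space $\mathcal T((s_r)_{r\ge 1})$, a finite subcover $\mathcal T((s_r)_{r\ge 1}) = \bigcup_{i=1}^m \Phi_{t_i}(U)$ exists. If one shows that $\Phi_{t}(U) \subset B_{k_t, \ell_t}$ for pairs $(k_t, \ell_t)$ depending only on $t$, $k_0$, $\ell_0$, then setting $k^* = \max_i k_{t_i}$ and $\ell^* = \max_i \ell_{t_i}$ yields $\mathcal T((s_r)_{r\ge 1}) \subset B_{k^*, \ell^*}$; summing the geometric series gives the uniform bound on any disc $|z| < 1+\rho$ with $\rho \in (0, 1/k^*)$ required by the first alternative.

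The main technical obstacle is therefore a quantitative propagation of analyticity under the cubic Szeg\H{o} flow: from $v \in B_{k_0, \ell_0}$ one needs $\Phi_t(v) \in B_{k_t, \ell_t}$ with $k_t, \ell_t$ depending only on $t$, $k_0$, $\ell_0$. The natural route is an abstract Cauchy--Kowalevski argument in the Nishida--Ovsyannikov scale of Banach spaces $X_\rho$ of holomorphic functions on $\{|z| < 1+\rho\}$ equipped with the sup norm: one checks that the cubic nonlinearity $u \mapsto \Pi(|u|^2 u)$ maps $X_\rho$ into $X_{\rho'}$, $0 < \rho' < \rho$, with operator bound controlled by $C\,\|u\|_{X_\rho}^2 / (\rho - \rho')$, thereby producing local-in-time analytic well-posedness with lifespan depending only on the initial analytic radius and bound. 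Combined with the conservation of the $H^{\frac 12}$ norm and of the singular values under $\Phi_t$, iteration yields global-in-time analytic propagation with $k_t, \ell_t$ finite for every $t \in \R$, closing the argument.
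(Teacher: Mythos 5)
Your proof is correct and follows essentially the same route as the paper: you run a Baire category argument on the compact metrisable torus using closed sets that encode quantitative analytic regularity, then use the ergodicity of the Szeg\H{o} flow (from the $\Q$-independence of the $s_r^2$) together with compactness and finite-time propagation of analyticity to pass from ``some set has nonempty interior'' to ``the whole torus is uniformly analytic.'' The paper does exactly this, with a single-index family $F_\ell := \{ v : \sum_{n\ge 0} {\rm e}^{n/\ell}|\hat v(n)| \le \ell\}$ in place of your two-index $B_{k,\ell}$ (the two are interchangeable, since both unions exhaust the analytic elements and both families consist of $H^{\frac 12}$-closed sets). The only genuine difference is in the propagation-of-analyticity step: you sketch a self-contained Cauchy--Kowalevski/Nishida argument in scales of Banach spaces of holomorphic functions, whereas the paper simply cites the result of G\'erard--Guo--Titi \cite{GGT}, which establishes precisely the needed finite-time persistence of analyticity for the cubic Szeg\H{o} flow. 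Your sketch is plausible and would work (the cubic nonlinearity loses an arbitrarily small amount of radius, compatible with a Nishida iteration), but since \cite{GGT} is already available, citing it is the shorter path; re-deriving it is optional rather than a flaw.
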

\begin{proof}
 The proof is an adaptation of the preceding one (Lemma \ref{alternative}) to the analytic setting. As, from \cite{GGT},  the cubic Szeg\H{o} equation propagates analyticity, the result follows from the Baire theorem applied to the closed sets 
$$F_\ell :=\{ v\in \mathcal T((s_r)_{r\ge 1}): \sum_{n=0}^\infty {\rm e}^{\frac n\ell}|\hat v(n)|\leq \ell \}$$
for $\ell \geq 1$.
\end{proof}
\section{Example of bounded analytic tori}\label{delta}
In this section, we prove Theorem \ref{sous_geometrique}.
\begin{proof}
Let $u\in \mathcal T((s_r)_{r\ge 1})$. Recall that 
$$u=\lim_{N\to \infty}u_N\text{ where }u_N(z):=\langle \mathscr C_N(z)^{-1} {\bf 1}_N\vert {\bf 1}_N\rangle,$$
$$\mathscr C_N(z):=\left(\frac{s_{2j-1}{\rm e}^{i\psi_{2j-1}}-zs_{2k}{\rm e}^{i\psi_{2k}}}{s_{2j-1}^2-s_{2k}^2}\right)_{1\le j,k\le N}$$ and
$$ {\bf 1}_N=\left(\begin{array}{c}1\\
\vdots\\
1\end{array}\right)\in \C^N.$$
Our assumption is  
$$
s_{r+1}=\e_r s_r\ ,\  r\ge 1\ ,$$
where the sequence $(\e_r)_{r\ge 1}$ satisfies $$0<\e_r\le \delta\text{ for some }\delta<1.$$
Our aim is  to prove that, for $\delta$ sufficiently small, the functions $u_N$ are holomorphic and uniformly bounded in some disc of radius $1+\rho$, where $\rho>0$, independently of $N$. Our strategy is to use that $\mathscr C_N(0)$ is related to a Cauchy matrix, and hence, that an explicit formula for its inverse is known. 
We write
 \begin{eqnarray*}
 \mathscr C_N(z)=\mathscr C_N(0)-z\dot{\mathscr C_N}=\mathscr C_N(0)(I-z\mathscr C_N(0)^{-1}\dot {\mathscr C_N})
 \end{eqnarray*}
 where 
 $$\dot{\mathscr C_N}:= \left(\frac{s_{2k}{\rm e}^{i\psi_{2k}}}{s_{2j-1}^2-s_{2k}^2}\right)_{1\le j,k\le N}$$
and we establish the following lemma.
\begin{lemma}\label{Estimates}
For  any $0<\delta<1$, there exists some  constant $C_\delta >0$ such that, for any $N\ge 1$,
\begin{equation}
\label{C_N(0)^{-1}}
\sum_{j,k}\left|(\mathscr C_N(0)^{-1})_{j,k}\right|\le C_\delta \, s_1\ .
\end{equation}
There exists a universal constant $A>0$ such that, for $\delta\in (0,\frac 12)$  and for  any $N\ge 1$,   
\begin{equation}
\label{R_N}
\Vert \mathscr C_N(0)^{-1}\dot{\mathscr C}_N\Vert_{\ell^1\to \ell^1} \le { A\, \delta }\ .
  \end{equation}
\end{lemma}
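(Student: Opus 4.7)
The plan is to exploit the Cauchy structure hidden in both $\mathscr{C}_N(0)$ and $\dot{\mathscr{C}}_N$, and then perform careful entrywise estimates using the geometric decay of the $s_r$'s. First, observe the common factorisations $\mathscr{C}_N(0)=D_1 C$ and $\dot{\mathscr{C}}_N=C D_2$, where $C$ is the Cauchy matrix $C_{j,k}=1/(s_{2j-1}^2-s_{2k}^2)$ with parameters $a_j=s_{2j-1}^2$ and $b_k=s_{2k}^2$, while $D_1=\mathrm{diag}(s_{2j-1}e^{i\psi_{2j-1}})$ and $D_2=\mathrm{diag}(s_{2k}e^{i\psi_{2k}})$ are diagonal. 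Hence $\mathscr{C}_N(0)^{-1}=C^{-1}D_1^{-1}$ and $\mathscr{C}_N(0)^{-1}\dot{\mathscr{C}}_N=C^{-1}D_1^{-1}CD_2$, and the classical Cauchy inverse formula
$$(C^{-1})_{k,j}=\frac{\prod_\ell (a_j-b_\ell)(a_\ell-b_k)}{(a_j-b_k)\prod_{\ell\ne j}(a_j-a_\ell)\prod_{\ell\ne k}(b_k-b_\ell)}$$
provides an explicit quantity to estimate.

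For the first estimate, the basic elementary inequality is that under $s_{r+1}\le \delta s_r$ one has $(1-\delta^{2|p-q|})\,s_{\min(p,q)}^2\le |s_p^2-s_q^2|\le s_{\min(p,q)}^2$. The strategy is to pair each numerator factor in $(C^{-1})_{k,j}$ with a companion denominator factor carrying the same index $\ell$: the resulting ratio is at most $(1-\delta^2)^{-1}$ when the indices involved are well separated, and decays like a power of $\delta$ when the two indices in a numerator/denominator pair are close. After this pairing, only a bounded number of ``unmatched'' factors remain, and these combine to give an overall scale of $s_1$ together with off-diagonal decay of the form $\delta^{c|j-k|}$ and a factor comparable to $s_{2\min(j,k)-1}/s_1$. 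Summing the resulting double geometric series yields the first estimate.

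For the second estimate, the key idea is to write $D_2=D_1\cdot (D_1^{-1}D_2)$ with $\|D_1^{-1}D_2\|_{\ell^1\to \ell^1}=\max_k s_{2k}/s_{2k-1}\le \delta$, which isolates the overall factor $\delta$. It then suffices to bound the auxiliary matrix $M:=C^{-1}D_1^{-1}CD_1$ uniformly in the $\ell^1\to \ell^1$ norm for $\delta<1/2$. Writing $M-I=C^{-1}D_1^{-1}[C,D_1]$ and noting that
$$[C,D_1]_{j,k}=\frac{s_{2k-1}e^{i\psi_{2k-1}}-s_{2j-1}e^{i\psi_{2j-1}}}{s_{2j-1}^2-s_{2k}^2}$$
vanishes on the diagonal, one re-runs the Cauchy-inverse bookkeeping; the vanishing of the diagonal of $[C,D_1]$ supplies one extra power of $\delta$ in the entrywise estimates, so that the resulting double series converges to a universal constant and the bound by $A$ follows.

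The main obstacle is the combinatorial bookkeeping in the Cauchy inverse formula: each entry of $C^{-1}$ is a ratio of products of $O(N)$ factors, and a naive bound would accumulate an $N$-dependent constant. The correct pairing, so that each matched pair contributes at most $(1-\delta^2)^{-1}$ while unmatched factors collapse to an $s_1$-scaled near-diagonal expression, is the heart of the argument. A secondary subtlety is that the two estimates are not really independent: the second one relies, via the auxiliary matrix $M-I$, on a refinement of the same entrywise analysis that gave the first, now with a vanishing-diagonal bonus exploited to extract the factor $\delta$ and a restriction $\delta<1/2$ that ensures convergence of the geometric series to a universal constant.
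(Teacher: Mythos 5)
For the first estimate you follow essentially the same route as the paper: factor out the diagonal, write the Cauchy inverse explicitly (the paper's $\alpha_j^{(N)},\beta_k^{(N)}$ are exactly the products appearing in the classical formula you quote), and control the resulting products under $s_{r+1}\le\delta s_r$ by pairing numerator against denominator factors; this reproduces the paper's entrywise bound \eqref{Estimate coeff} and the final summation.

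For the second estimate, however, the decomposition you propose has a genuine gap. You reduce to showing that $\|M\|_{\ell^1\to\ell^1}\le A$ uniformly, where $M:=C^{-1}D_1^{-1}CD_1$, but this is false. Take $N=2$, $\psi_r=0$, $s_1=1$, $s_2=\eta$, $s_3=\delta\eta$, $s_4=\delta^2\eta$ with $0<\eta\le\delta<\tfrac12$; then every ratio $\epsilon_r=s_{r+1}/s_r$ is $\le\delta$, yet a leading-order computation gives
\begin{equation*}
(M-I)_{11}=(C^{-1})_{12}\bigl(D_1^{-1}[C,D_1]\bigr)_{21}\approx(-s_3^2)\Bigl(-\frac{s_1}{s_3s_2^2}\Bigr)=\frac{s_1s_3}{s_2^2}=\frac{\epsilon_2}{\epsilon_1}=\frac{\delta}{\eta}\ ,
\end{equation*}
which blows up as $\eta\to0^+$ at fixed $\delta$. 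So $\|M\|_{\ell^1\to\ell^1}$ is not bounded by any universal constant.

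The reason the split $\|MD_1^{-1}D_2\|\le\|M\|\,\|D_1^{-1}D_2\|$ is too lossy is that $D_1^{-1}D_2$ acts column by column: $(MD_1^{-1}D_2)_{k\ell}=\epsilon_{2\ell-1}\,{\rm e}^{i(\psi_{2\ell}-\psi_{2\ell-1})}M_{k\ell}$, and in the example above the large entry $M_{11}\approx\delta/\eta$ sits in column $\ell=1$, which carries the \emph{tiny} multiplier $\epsilon_1=\eta$, so $\epsilon_1M_{11}\approx\delta$ is actually fine — but the decoupled product $\|M\|\cdot\max_\ell\epsilon_{2\ell-1}\approx(\delta/\eta)\cdot\delta$ is not. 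Likewise, the claimed ``extra power of $\delta$ from the vanishing diagonal of $[C,D_1]$'' does not materialise: entrywise, $(D_1^{-1}[C,D_1])_{j\ell}$ is roughly a factor $1/\epsilon_{2\ell-1}$ \emph{larger} than $(D_1^{-1}\dot{\mathscr C}_N)_{j\ell}$ for $j\ne\ell$, and since $\epsilon_{2\ell-1}$ need only satisfy $\epsilon_{2\ell-1}\le\delta$ this factor can dwarf $1/\delta$. The paper avoids the trap by estimating the entries of the product $\mathscr C_N(0)^{-1}\dot{\mathscr C}_N$ directly, column by column, so that the $\epsilon$-dependence coming from the Cauchy inverse and the $\epsilon$-decay supplied by $\dot{\mathscr C}_N$ in the same column are never decoupled; to salvage your decomposition you would have to keep track of the column index $\ell$ throughout, which in effect reduces to the paper's direct computation.
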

Let us assume this lemma proved. Take $\rho >0$, and choose $\delta _0$ such that $A\delta _0(1+\rho )\leq \frac 12$. Hence, for any $\delta \in (0,\delta _0)$, from estimate (\ref{R_N}),  $$(I-z\mathscr C_N(0)^{-1} \dot{\mathscr C_N})$$ is invertible for any $z$ with $|z|<1+\rho $ and its inverse $R_N(z)$ is analytic and has uniformly bounded norm 
for any $z$ with $|z|< 1+\rho $. Indeed, for any $N\ge 1$, any $z$ with $|z|<1+\rho $, by the Neumann series identity,
\begin{equation}\label{estim R_N}\Vert R_N(z)\Vert_{\ell^1\to \ell^1}\leq \sum_{k=0}^\infty |z|^k\Vert  \mathscr C_N(0)^{-1}\dot{\mathscr C_N}\Vert_{\ell^1\to \ell^1}^k  \leq  \sum_{k=0}^\infty 2^{-k}\leq 2\ .\end{equation}
Writing $$\mathscr C_N(z)^{-1}=(I-z(\mathscr C_N(0)^{-1} \dot{\mathscr C_N}))^{-1}\mathscr C_N(0)^{-1} =R_N(z)(\mathscr C_N(0))^{-1}$$
we get $$u_N(z)=\langle R_N(z)\mathscr C_N(0)^{-1}({\bf 1}_N), {\bf 1}_N\rangle .$$
Using (\ref{C_N(0)^{-1}}) and (\ref{estim R_N}),
we conclude that the series defining $u_N$ converges uniformly for $|z|< 1+\rho $. Hence $u_N$ is  analytic and uniformly bounded in the disc of radius $1+\rho$.
We infer that $u$ is as well analytic in the disc of radius $1+\rho$ and bounded on this disc. This completes the proof of Theorem \ref{sous_geometrique}, modulo Lemma \ref{Estimates}.
\end{proof}
\subsection{Proof of Lemma \ref{Estimates}}
Notice that 
$$\mathscr C_N(0)={\rm diag}\left ( s_{2j-1}{\rm e}^{i\psi_{2j-1}}\right )\mathscr T\ ,\ \mathscr T:=\left(\frac{1}{s_{2j-1}^2-s_{2k}^2}\right)_{1\le j,k\le N}\ .
$$
  Since $\mathscr T$ is a Cauchy matrix, its inverse is explicitly known, so the inverse of $\mathscr C_N(0) $ is given by
  $$\mathscr C_N(0)^{-1}=\left(\frac{(-1)^{j+k+N}\alpha_j^{(N)}\beta_k^{(N)}}{ s_{2j-1}^2-s_{2k}^2}\frac 1{s_{2j-1}{\rm e}^{i\psi_{2j-1}}}\right)_{1\le k,j\le N}$$
  where 
\begin{eqnarray*}
\alpha_j^{(N)}&:=&\frac{\prod_\ell (s_{2j-1}^2-s_{2\ell}^2)}{\prod_{\ell<j}(s_{2\ell-1}^2-s_{2j-1}^2)\prod_{\ell>j}(s_{2j-1}^2-s_{2\ell-1}^2)}\\
\beta_k^{(N)}&:=&\frac{\prod_\ell (s_{2\ell-1}^2-s_{2k}^2)}{\prod_{\ell<k}(s_{2\ell}^2-s_{2k}^2)\prod_{\ell>k}(s_{2k}^2-s_{2\ell}^2)}.
\end{eqnarray*}
  In particular,
  \begin{eqnarray*}
|\alpha_j^{(N)}|&=&\prod_{\ell<j}\frac{s_{2\ell}^2}{s_{2\ell-1}^2}\prod_{\ell<j}\left (\frac{ 1-\prod_{r=2\ell}^{2j-2}\e_r ^2}{1-\prod_{r=2\ell-1}^{2j-2}\e_r^2}\right )\prod_{\ell >j}\frac{(1-\prod_{r=2j-1}^{2\ell-1}\e_r^2 )}{(1-\prod_{r=2j-1}^{2\ell-2}\e_r^2)}s_{2j-1}^2(1-\e_{2j-1}^2)\\
&\leq &\prod_{\ell<j}\e_{2\ell -1}^2 \, \frac{s_{2j-1}^2}{\prod _{m=1}^\infty (1-\delta ^{4m})} .
\end{eqnarray*}
{Indeed, in the first line above, the factors in the second product are bounded by $1$, while, in the third product, the $\ell$-- factor is bounded by $\frac{1}{1-\delta^{4(\ell -j)}}\ .$} Similarly, we have
\begin{eqnarray*}
|\beta_k^{(N)}|
&=&\prod_{\ell<k}\frac{s_{2\ell-1}^2}{s_{2\ell}^2}\prod_{\ell >k}\left (\frac{1-\prod_{r=2k}^{2\ell-2}\e_r^2 }{1-\prod_{r=2k}^{2\ell-1}\e_r^2}\right )\prod_{\ell<k}\frac{ (1-\prod_{r=2\ell-1}^{2k-1}\e_r^2 )}{(1-\prod_{r=2\ell}^{2k-1}\e_r^2)}s_{2k-1}^2(1-\e_{2k-1}^2)\\
&\leq&\prod_{\ell<k}\frac 1{ \e_{2\ell-1}^2}\, \frac{s_{2k-1}^2}{\prod_{m=1}^\infty (1-\delta ^{4m})}
\end{eqnarray*}
Setting
$$B_\delta =\frac{1}{\prod_{m=1}^\infty (1-\delta ^{4m})^2}\ ,$$
we obtain
 \begin{eqnarray*}
 \left|\left(\mathscr C_N(0)^{-1}\right)_{kj}\right|&\leq& B_\delta \frac{ s_{2j-1}s_{2k-1}^2}{|s_{2j-1}^2-s_{2k}^2|}\prod_{\ell<j}\e_{2\ell -1}^2\prod_{\ell<k}\frac 1{ \e_{2\ell-1}^2}\\
 &\leq& B_\delta \left\{\begin{array}{lll}
 &\frac 1{1-\delta^{4(k-j)+2}}\frac{s_{2k-1}^2}{s_{2j-1}}\prod_{j\le \ell<k}\frac 1{ \e_{2\ell-1}^2}&\text{ if }j<k\\
 &\frac 1{1-\delta^{2}}s_{2j-1}&\text{ if }j=k\\
 &\frac 1{1-\delta^{4(j-k-1)+2}}s_{2j-1}\frac{s_{2k-1}^2}{s_{2k}^2}\prod_{k\le \ell<j}\e_{2\ell-1}^2&\text{ if }j>k.\end{array}\right.
 \end{eqnarray*}
 To summarize,
 \begin{equation}\label{Estimate coeff}
 \left|\left(\mathscr C_N(0)^{-1}\right)_{kj}\right| \leq  \frac{B_\delta}{1-\delta^2} \, s_{2j-1}\left\{\begin{array}{lll}
 &{\delta^{2(k-j)}}&\text{ if }j<k\\
 &1&\text{ if }j=k, k+1\\
 &{\delta^{2(j-k-1)}}&\text{ if }j>k+1\end{array}\right.\\
\end{equation} 
In particular, it gives 
$$\sum_{k,j} \left|(\mathscr C_N(0)^{-1})_{jk} \right| \leq \frac{2 B_{\delta}}{(1-\delta ^2)^2}\sum_{j\le N} s_{2j-1}\leq \frac{{2}B_\delta \, s_1}{(1-\delta ^2)^3}\ .$$

 This proves estimate (\ref{C_N(0)^{-1}}).\\
For the second estimate, one has to consider
$$\Vert \mathscr C_N(0)^{-1}\dot{\mathscr C_N}\Vert_{\ell^1\to \ell^1}\leq \sup_{\ell}\sum_{k}|(\mathscr C_N(0)^{-1} \dot{\mathscr C_N})_{k\ell}|.$$
Recall that
 $$\dot{\mathscr C_N}:= \left(\frac{s_{2\ell }{\rm e}^{i\psi_{2\ell}}}{s_{2j-1}^2-s_{2\ell }^2}\right)_{1\le j,\ell \le N}.$$
 In particular,
 $$\left |(\dot{\mathscr C_N})_{j\ell}\right | \leq \frac{1}{1-\delta ^2}\left\{\begin{array}{lll} \frac{s_{2\ell}}{s_{2j-1}^2}&\text{ if } j\le \ell\\
 \frac 1{s_{2\ell}} &\text{ if } j\ge \ell +1\end{array}\right.$$
 As $(\mathscr C_N(0)^{-1} \dot{\mathscr C_N})_{k\ell}= \sum_j (\mathscr C_N(0)^{-1})_{kj} (\dot{ \mathscr C_N})_{j\ell}$, we get from
the preceding estimate (\ref{Estimate coeff}) on $|(\mathscr C_N(0)^{-1})_{kj}|$, 
\begin{itemize}
\item if $k>\ell$
\begin{eqnarray*}
  &&\left |(\mathscr C_N(0)^{-1} \dot{\mathscr C_N})_{k\ell}\right |\leq  \frac{B_\delta }{(1-\delta ^2)^2}\times \\ 
  &&\times \left (\sum_{j\le \ell}\frac{s_{2\ell}}{s_{2j-1}}\prod_{j\le r\le k-1} \e_{2r}^2 +\sum_{\ell+1\le j\le k}\frac{ s_{2j-1}}{s_{2\ell }}\prod_{j\le r\le k-1} \e_{2r}^2+\sum_{j\ge k+1}  \frac{s_{2j-1}}{s_{2\ell }}\prod_{k+1 \le r\le j-1}\e_{2r-1}^2\right )\\
  &\leq& \frac{B_\delta }{(1-\delta ^2)^2}\left (\sum_{j\le \ell}\delta^{2(\ell-j)+1}\delta^{2(k-j)}+\sum_{\ell+1\le j\le k}\delta^{2(j-\ell-1)+1}\delta^{2(k-j)}+ \sum_{j\ge k+1}\delta^{2(j-\ell -1)+1}\delta^{2(j-k-1)}\right )\\
 & \le& \delta  \frac{B_\delta }{(1-\delta ^2)^2}\left(2 \delta^{2(k-\ell)}\sum_{s\ge 0} \delta^{4s}+\sum_{\ell+1\le j\le k}\delta^{2(j-\ell-1)}\delta^{2(k-j)}\right)
    \end{eqnarray*}
 As $$\sum_{k,\atop k\ge \ell +1}  \sum_{\ell+1\le j\le k}\delta^{2(j-\ell-1)}\delta^{2(k-j)}=\sum_{j\ge \ell+1}\sum_{k\ge j}\delta^{2(j-\ell-1)}\delta^{2(k-j)}=\frac 1{(1-\delta^2)^2}$$ one gets
     $$\sum_{k ,\atop k>\ell} \left |(\mathscr C_N(0)^{-1} \dot{\mathscr C_N})_{k\ell}\right | \le\frac{ \delta B_\delta }{(1-\delta ^2)^4 }\left( \frac {1+3\delta^2}{1+\delta^2}\right).$$
\item If $k<\ell$
\begin{eqnarray*}
&&\left |(\mathscr C_N(0)^{-1} \dot{\mathscr C_N})_{k\ell}\right |  \leq  \frac{B_\delta }{(1-\delta ^2)^2}\times \\
&&\times \left (\sum_{j\le k}\frac{s_{2\ell}}{s_{2j-1}}\prod_{j\le r\le k-1} \e_{2r}^2 +\sum_{k+1\le j\le \ell}\frac{ s_{2\ell }}{s_{2j-1}}\prod_{k+1\le  r\le j-1}\e_{2r-1}^2+\sum_{j\ge \ell+1}  \frac{s_{2j-1}}{s_{2\ell }}\prod_{k+1\le r\le j-1}\e_{2r-1}^2\right )\\
 &\leq&\frac{B_\delta }{(1-\delta ^2)^2}\left (\sum_{j\le k}\delta^{2(\ell -j)+1}\delta^{2(k-j)}+\sum_{k+1\le j\le \ell}\delta^{2(\ell-j)+1}\delta^{2(j-k-1)}+\sum_{j\ge \ell+1}\delta^{2(j-\ell-1)+1}\delta^{2(j-k-1)}\right )\\
    &\leq&\frac{\delta B_\delta }{(1-\delta ^2)^2}\left (2\delta^{2(\ell-k)}\sum_{s\ge 0}\delta^{4s}+\sum_{k+1\le j\le \ell}\delta^{2(\ell-j)}\delta^{2(j-k-1)}\right )
    \end{eqnarray*}
     and, as before, 
   $$\sum_{k , \atop k<\ell} \left |(\mathscr C_N(0)^{-1} \dot{\mathscr C_N})_{k\ell}\right | \le \frac{ \delta\, B_\delta }{(1-\delta ^2)^4}\left( \frac {1+3\delta^2}{1+\delta^2}\right).$$ 
  \item For $k=\ell$, 
  \begin{eqnarray*}
  \left |(\mathscr C_N(0)^{-1} \dot{\mathscr C_N})_{kk}\right |  &\leq&\frac{B_\delta}{(1-\delta ^2)^2}\left (\sum_{j\le \ell}\frac{s_{2\ell}}{s_{2j-1}}\prod_{j\le r\le k-1} \e_{2r}^2 +\sum_{j\ge k+1}  \frac{s_{2j-1}}{s_{2\ell }}\prod_{k+1\le r\le j-1}\e_{2r-1}^2\right )\\
  &\leq&\frac{B_\delta}{(1-\delta ^2)^2}\left (\sum_{j\le \ell}\delta^{2(\ell-j)+1}\delta^{2(k-j)}+\sum_{j\ge k+1}\delta^{2(j-\ell-1)+1}\delta^{2(j-k-1)}\right )\\
  &\leq& \frac{B_\delta}{(1-\delta ^2)^2}\left (\sum_{j\le k}\delta^{4(k-j)+1}+\sum_{j\ge k+1}\delta^{4(j-k-1)+1}\right ) =2\frac{\delta B_\delta }{(1-\delta ^2)^2(1-\delta^4)}. 
  \end{eqnarray*}
  \end{itemize}
Eventually,  if $\delta \leq \frac 12$, say, we obtain, with a universal constant $A$,
$$\Vert (\mathscr C_N(0)^{-1} \dot{\mathscr C_N})\Vert_{\ell^1\to \ell^1}\leq \sup_\ell \sum_{k } \left |(\mathscr C_N(0)^{-1} \dot{\mathscr C_N})_{k\ell}\right | \leq A\, \delta  .$$ 
This completes the proof of Lemma \ref{Estimates}.

\section{The totally geometric spectral data}\label{gamma}
In this section, we consider the totally geometric case and prove Theorem \ref{totally_geometric}. 
Namely, for some fixed $h>0$ and $\theta \in \R$, we consider the symbol $u$ with spectral data $(s_r,\psi_r)$ with $s_r={\rm e}^{-rh}$ and $\psi_r=r\theta h$. In particular, $s_{r+1}=s_r{\rm e}^{-h}$ so that, for $h$ sufficiently large, it becomes a particular case of sub-geometric spectral data treated in Theorem \ref{sous_geometrique}. However, the result here does not require any smallness on $ {\rm e}^{-h}$.\\
Our strategy here is to use Toeplitz operators and a stability result from Baxter (1963). 
\subsection{Background on Toeplitz operators} Let us first introduce  some basic notation.
For  a continuous function $\Phi$ on $\T$, we denote by $T(\Phi)$ the Toeplitz operator of symbol $\Phi$ defined on $L^2_+(\T)$ by
$$T(\Phi)(f)=\Pi(\Phi f)$$ or equivalently the operator defined on $\ell^2(\N)$ by
$$(T(\Phi)((a_k)))_j:=\sum_{k=0}^\infty\hat \Phi(j-k)a_k,\; j\in \N.$$
For any integer $N$, we denote by $T_N(\Phi)$ the truncated operator defined by
$$T_N(\Phi):=\Pi_NT(\Phi)\Pi_N.$$
Here $$\Pi_N: \ell^2(\N)\to \ell^2(\N)$$ is the orthogonal projector :$$ (x_0,x_1,x_2,\dots)\mapsto (x_0,x_1,x_2,\dots,x_{N-1}).$$
The operator $T_N$ corresponds to the $N\times N$ truncated Toeplitz matrix $$(\hat\Phi(j-k))_{0\le j,k\le N-1}.$$
Recall that a sequence of $N\times N$ matrices $(A_N)_{N\ge 1}$ is said to be stable if there is an $N_0$ such that the matrices $A_N$ are
invertible for all $N\ge N_0$ and 
$$\sup_{N\ge N_0}\Vert A_N^{-1}\Vert_{\ell^2\to \ell^2} <\infty.$$
\begin{theorem} [\cite{Baxter}, \cite{BG}]\label{Baxter}
The sequence $(T_N(\Phi))_{N\ge 1}$ is
stable if and only if $T(\Phi)$ is invertible. 
\end{theorem}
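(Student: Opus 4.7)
The plan is to establish the two implications separately, since their difficulties are quite asymmetric.

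For the direction stability $\Rightarrow$ invertibility, the idea is that $T_N(\Phi)\to T(\Phi)$ strongly on $\ell^2(\N)$ for continuous $\Phi$, and similarly for the adjoints $T_N(\Phi)^*=T_N(\overline{\Phi})$. If $\sup_{N\ge N_0}\Vert T_N(\Phi)^{-1}\Vert\le M$, then from the identity $T_N(\Phi)T_N(\Phi)^{-1}=\Pi_N$ one extracts by a standard weak compactness argument a bounded operator $S$ with $\Vert S\Vert\le M$ that is both a left and a right inverse of $T(\Phi)$; the same argument applied to $T_N(\Phi)^*$ takes care of the other side. This direction needs only the uniform norm bound and the strong approximation, nothing about the symbol beyond continuity.

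For the converse, invertibility of $T(\Phi)$ with continuous $\Phi$ forces $\Phi$ to be non-vanishing on $\T$ with zero winding number (Gohberg--Coburn Fredholm criterion). One then performs a Wiener--Hopf factorization $\Phi=\Phi_-\Phi_+$, with $\Phi_+,1/\Phi_+$ boundary values of bounded holomorphic functions in $\D$ and $\Phi_-,1/\Phi_-$ boundary values of bounded holomorphic functions on the exterior disc. This yields the factorization $T(\Phi)=T(\Phi_-)T(\Phi_+)$, each factor invertible on $L^2_+(\T)$, together with the explicit formula $T(\Phi)^{-1}=T(1/\Phi_+)T(1/\Phi_-)$.

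The main obstacle is to transfer this factorization down to the truncations. The strategy is to prove that $T_N(1/\Phi_+)T_N(1/\Phi_-)$ is a uniformly good approximate inverse of $T_N(\Phi)$, and then upgrade it to a genuine inverse by a Neumann-series perturbation for $N$ sufficiently large. The obstruction is that $T_N(\Phi_-)T_N(\Phi_+)$ differs from $T_N(\Phi)$ by a cross term
$$\Pi_N T(\Phi_-)(I-\Pi_N)T(\Phi_+)\Pi_N,$$
which is a product of two co-Toeplitz pieces with Hankel-type structure on the off-diagonal block. Showing that the operator norm of this error tends to zero as $N\to\infty$ is the technical heart of Baxter's argument: it relies on the regularity of $\Phi$ (continuity, or absolute convergence of the Fourier series in the original Wiener-algebra setting of \cite{Baxter}) to produce the off-diagonal decay of the Toeplitz matrices that makes the cross term small. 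Once this decay is in hand, the perturbation argument delivers both the invertibility of $T_N(\Phi)$ for large $N$ and the uniform bound on $T_N(\Phi)^{-1}$ required by stability.
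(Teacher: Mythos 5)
The paper does not prove this theorem; it cites Baxter and B\"ottcher--Grudsky. Evaluating your proposal on its own merits: your easy direction (stability $\Rightarrow$ invertibility) is fine. The hard direction, however, contains a genuine error at precisely the step you flag as the technical heart. The cross term
$$\Pi_N T(\Phi_-)(I-\Pi_N)T(\Phi_+)\Pi_N \;=\; T_N(\Phi)-T_N(\Phi_-)T_N(\Phi_+)$$
does \emph{not} tend to zero in operator norm, no matter how regular $\Phi$ is. Indeed, conjugating by the flip $W_N:(x_0,\dots,x_{N-1})\mapsto(x_{N-1},\dots,x_0)$, this operator becomes $W_N\,H(\tilde\Phi_-)H(\Phi_+)\,W_N$ where $H(\cdot)$ denotes the Hankel operator with matrix $(\hat\Phi(j+k+1))_{j,k\ge 0}$ and $\tilde\Phi(\zeta)=\Phi(1/\zeta)$. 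Since $W_N$ is a unitary involution of $\Pi_N\ell^2$, its norm equals $\Vert\Pi_N H(\tilde\Phi_-)H(\Phi_+)\Pi_N\Vert$, which \emph{increases} to $\Vert H(\tilde\Phi_-)H(\Phi_+)\Vert>0$ as $N\to\infty$. A concrete counterexample to your claim: take $\Phi_+(\zeta)=1-a\zeta$, $\Phi_-(\zeta)=1-b\zeta^{-1}$ with $0<|a|,|b|<1$; then for every $N\ge 2$ the cross term is the rank-one matrix with a single nonzero entry $ab$ in position $(N-1,N-1)$, so its norm is exactly $|ab|$ for all $N$. Thus the Neumann-series perturbation off $T_N(1/\Phi_+)T_N(1/\Phi_-)$ cannot be launched as you describe.

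What a correct proof must confront is that $T_N(\Phi)$ is well approximated by $T(\Phi)$ near the index $0$ corner \emph{and} by $T(\tilde\Phi)$ (via the flip $W_N$) near the index $N$ corner, and the error term lives precisely in the far corner. The underlying structural statement is that $(T_N(\Phi))$ is stable if and only if \emph{both} $T(\Phi)$ and $T(\tilde\Phi)$ are invertible; the theorem as stated then follows because, for scalar continuous $\Phi$, the Gohberg--Coburn criterion (no zeros, winding number zero, which you correctly invoke) gives invertibility of $T(\Phi)$ and $T(\tilde\Phi)$ simultaneously. The actual argument absorbs the persistent corner term by constructing a two-sided correction involving $T(\tilde\Phi)^{-1}$ conjugated by $W_N$, or equivalently by a $C^*$-algebraic/local-principle argument as in B\"ottcher--Grudsky, rather than by a single Neumann series. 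Your write-up, as it stands, proves nothing beyond the easy direction, because the crucial smallness claim is false.
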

Let us emphasize that the operators are considered as operators acting on $\ell^2(\N)$ or $L^2_+(\T)$ so that the stability is evaluated in the $\ell^2(\N)$ norm. The characterization of the invertibility of Toeplitz operators is well known. We recall it for the convenience of the reader.
\begin{theorem}\label{ContinuityToeplitz}
Let $\Phi$ be a continuous function on the unit circle. 
If $\Phi$ has index $0$ and does not vanish on the circle, then $T_\Phi$ is invertible on $L^2_+(\T)$.
Under these hypotheses, $\Phi={\rm e}^{\varphi}=\Phi_+\overline {\Phi_-}$ with
$$\Phi_+={\rm e}^{\Pi(\varphi)}\mbox{ and }\Phi_-={\rm e}^{\overline{(I-\Pi)(\varphi)}}$$
and the inverse of $T_\Phi$ is given by $T_{\Phi_+^{-1}}T_{\overline \Phi_-^{-1}}$.
\end{theorem}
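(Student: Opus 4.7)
The plan is to construct $T(\Phi)^{-1}$ explicitly via the Wiener--Hopf factorization announced in the statement. Three ingredients are needed: a continuous logarithm of $\Phi$; its analytic/antianalytic splitting; and the multiplicativity of Toeplitz operators with respect to one-sided factors.

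First, because $\Phi\in C(\T)$ is nowhere zero with winding number $0$, the standard covering argument for $\exp:\C\to\C^\ast$ provides $\varphi\in C(\T)$ with $\Phi=e^\varphi$. Split $\varphi=\varphi_++\varphi_-$ with $\varphi_+:=\Pi(\varphi)$ (nonnegative modes) and $\varphi_-:=(I-\Pi)(\varphi)$ (strictly negative modes), so that $\overline{\varphi_-}$ is analytic. Setting $\Phi_+:=e^{\varphi_+}$ and $\Phi_-:=e^{\overline{\varphi_-}}$, one obtains analytic $\Phi_\pm$ with pointwise reciprocals $\Phi_+^{-1}=e^{-\varphi_+}$, $\Phi_-^{-1}=e^{-\overline{\varphi_-}}$, and
$$\Phi \;=\; e^{\varphi_++\varphi_-} \;=\; \Phi_+\,\overline{\Phi_-}.$$

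Second, I use two elementary multiplicativity rules: $T(fg)=T(f)T(g)$ whenever either $g$ is analytic or $f$ is antianalytic. The first follows from $T(g)h=gh$ on $L^2_+$; the second from $\Pi(f\cdot(I-\Pi)h)=0$, because when $f$ has nonpositive Fourier support and $(I-\Pi)h$ has strictly negative support, their product has strictly negative support. Applying the second rule to $\Phi=\overline{\Phi_-}\cdot\Phi_+$ gives $T(\Phi)=T(\overline{\Phi_-})\,T(\Phi_+)$. Applying both rules to $\Phi_+\cdot\Phi_+^{-1}=\Phi_+^{-1}\cdot\Phi_+=1$ yields $T(\Phi_+)^{-1}=T(\Phi_+^{-1})$, and analogously $T(\overline{\Phi_-})^{-1}=T(\overline{\Phi_-^{-1}})$. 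Composing,
$$T(\Phi)^{-1}\;=\;T(\Phi_+)^{-1}\,T(\overline{\Phi_-})^{-1}\;=\;T(\Phi_+^{-1})\,T(\overline{\Phi_-^{-1}}),$$
which is the announced formula and in particular proves that $T(\Phi)$ is invertible on $L^2_+(\T)$.

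The main technical obstacle is regularity: for a general $\varphi\in C(\T)$ the Riesz projection $\Pi(\varphi)$ is only of $\mathrm{BMO}$ class, so $\Phi_\pm$ need not lie in $L^\infty$ and the multiplicativity rules do not apply literally. The standard fix is approximation. Pick trigonometric-polynomial approximants $\varphi_n\to\varphi$ uniformly (e.g.\ Fej\'er means) and set $\Phi_n:=e^{\varphi_n}$. Then the factors $(\Phi_n)_\pm=e^{\Pi(\varphi_n)}$ and $e^{\overline{(I-\Pi)(\varphi_n)}}$ are exponentials of polynomials, hence analytic in a neighbourhood of the closed disc and manifestly in $H^\infty$, so the algebra of the previous paragraph holds for each $\Phi_n$ verbatim. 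Uniform convergence $\Phi_n\to\Phi$ forces operator-norm convergence $T(\Phi_n)\to T(\Phi)$, while convergence of $(\Phi_n)_\pm^{\pm 1}$ to $\Phi_\pm^{\pm 1}$ in every $H^p$, $p<\infty$ (obtained from John--Nirenberg applied to the uniformly $\mathrm{BMO}$-bounded sequence $\Pi(\varphi_n)$), provides strong convergence on $L^2_+$ of the corresponding Toeplitz operators. Passing to the limit in the $\Phi_n$-factorization transfers both the invertibility and the explicit inverse formula to $\Phi$.
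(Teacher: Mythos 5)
The paper states Theorem~\ref{ContinuityToeplitz} without proof, as a recalled classical result, so there is no in-house argument to compare yours against; I will only assess the argument itself. Your Wiener--Hopf framework is the right one and you correctly diagnose the obstacle (for continuous $\varphi$, $\Pi(\varphi)$ is merely VMOA, so $\Phi_\pm^{\pm 1}$ may fail to lie in $H^\infty$). However, the approximation step contains a genuine gap, at two linked points.

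First, to transfer invertibility from $T(\Phi_n)$ to $T(\Phi)$, norm convergence $T(\Phi_n)\to T(\Phi)$ is not enough: you also need $\sup_n\Vert T(\Phi_n)^{-1}\Vert<\infty$, and this uniform bound is not established. It is \emph{not} a consequence of any bound you obtain on the factors. Indeed $\Vert T\bigl((\Phi_n)_+^{-1}\bigr)\Vert_{L^2_+\to L^2_+}=\Vert (\Phi_n)_+^{-1}\Vert_{L^\infty}$, and when $\Phi_+^{-1}\notin L^\infty$ this tends to infinity, so the individual factor operators are not uniformly bounded and hence do not converge strongly on $L^2_+$; by Banach--Steinhaus, the assertion ``[$H^p$ convergence of the factors] provides strong convergence on $L^2_+$ of the corresponding Toeplitz operators'' is false. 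Second, the uniform $\bigcap_{p<\infty}L^p$ control on $(\Phi_n)_\pm^{\pm1}$, although correct (via the VMO splitting plus John--Nirenberg), only shows that the composition $T\bigl((\Phi_n)_+^{-1}\bigr)T\bigl(\overline{(\Phi_n)_-^{-1}}\bigr)$ maps $L^2_+$ into $L^q_+$ for every $q<2$ with uniform bounds; it does not show a uniform $L^2_+\to L^2_+$ bound, which is precisely what one needs. In short, ``the inverses stay uniformly bounded'' is essentially equivalent to the invertibility of $T(\Phi)$ that you are trying to prove, so the limiting argument as written is circular.

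A standard way to close the gap is to replace the approximation step by Fredholm theory: for $\Phi,\Psi\in C(\T)$ the semi-commutator $T(\Phi)T(\Psi)-T(\Phi\Psi)$ is compact (Hartman), so if $\Phi$ is continuous, nonvanishing, of index $0$, then $T(\Phi)T(\Phi^{-1})-I$ and $T(\Phi^{-1})T(\Phi)-I$ are compact, hence $T(\Phi)$ is Fredholm of index $-\mathrm{wind}(\Phi)=0$; Coburn's lemma (either $\ker T(\Phi)$ or $\ker T(\Phi)^*$ is trivial for any nonzero symbol) then forces both kernel and cokernel to vanish, i.e.\ $T(\Phi)$ is invertible. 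Once invertibility is known, your Wiener--Hopf factorization and the one-sided multiplicativity identities $T(\overline{\Phi_-}\Phi_+)=T(\overline{\Phi_-})T(\Phi_+)$, $T(\Phi_+)T(\Phi_+^{-1})=I$, $T(\overline{\Phi_-})T(\overline{\Phi_-^{-1}})=I$ identify the inverse as $T(\Phi_+^{-1})T(\overline{\Phi_-^{-1}})$ on a dense subspace, and this agrees with $T(\Phi)^{-1}$ by density; this is the correct reading of the final formula, since the two individual factors need not be bounded on $L^2_+$.
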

As an immediate consequence, one gets the following characterization of the stability of truncated Toeplitz operators.
\begin{corollary}\label{Baxterbound}
Let $\Phi$ be a continuous function on the unit circle.
The sequence of truncated Toeplitz operators $(T_N(\Phi))$ is stable if and only if $\Phi$ has no zero on the unit circle and has index $0$.
\end{corollary}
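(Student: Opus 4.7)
The plan is to combine Baxter's theorem (Theorem \ref{Baxter}) with the classical Gohberg--Krein characterization of invertible Toeplitz operators with continuous symbol, of which Theorem \ref{ContinuityToeplitz} records the sufficient half. First I would invoke Baxter's theorem to reduce the claim to an invertibility question: the sequence $(T_N(\Phi))_{N\ge 1}$ is stable if and only if $T(\Phi)$ is invertible on $\ell^2(\N)$. The task then becomes to characterize invertibility of $T(\Phi)$.

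The implication ``$\Phi$ zero-free and $\mathrm{ind}(\Phi)=0 \Rightarrow T(\Phi)$ invertible'' is exactly Theorem \ref{ContinuityToeplitz}, and requires no further work since the Wiener--Hopf factorization $\Phi=\Phi_+\overline{\Phi_-}$ produces an explicit two-sided inverse $T_{\Phi_+^{-1}}T_{\overline{\Phi_-}^{-1}}$ of $T(\Phi)$.

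For the converse, I would apply the standard Fredholm theory of Toeplitz operators with continuous symbol. The key ingredient is Hartman's theorem: $H_\Phi = (I-\Pi) M_\Phi \Pi$ is compact whenever $\Phi \in C(\T)$. This yields the $\ast$-homomorphism property $T(\Phi_1\Phi_2)-T(\Phi_1)T(\Phi_2)\in \mathcal{K}$ modulo compacts, and in particular identifies the essential spectrum of $T(\Phi)$ on $L^2_+(\T)$ with $\Phi(\T)$. Thus if $\Phi$ had a zero on $\T$, then $0$ would lie in the essential spectrum of $T(\Phi)$ and $T(\Phi)$ could not even be Fredholm, let alone invertible. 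Once $\Phi$ is known to be zero-free, one writes $\Phi(z)=z^n\exp(\varphi(z))$ with $n=\mathrm{ind}(\Phi)$ and $\varphi\in C(\T)$; the already proved direction applied to $\exp(\varphi)$ (which has index $0$) makes $T(\exp(\varphi))$ invertible, and the compact-perturbation identity gives $\mathrm{ind}_{\mathrm{Fred}}(T(\Phi))=\mathrm{ind}_{\mathrm{Fred}}(T(z^n))=-n$. Invertibility of $T(\Phi)$ therefore forces $n=0$.

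The main (minor) obstacle is marshalling these classical ingredients---Hartman's compactness theorem, the homomorphism property modulo compacts, and the Fredholm index formula for the shift---in a self-contained way; otherwise one may simply invoke the Gohberg--Krein theorem as a black box. The genuine content of the corollary lies in Theorems \ref{Baxter} and \ref{ContinuityToeplitz}; what remains here is entirely standard.
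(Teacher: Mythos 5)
Your proof is correct and takes essentially the approach the paper intends: reduce stability to invertibility of $T(\Phi)$ via Theorem \ref{Baxter}, get the sufficient direction from Theorem \ref{ContinuityToeplitz}, and supply the necessary direction from the Gohberg--Krein theory (Hartman's compactness of $H_\Phi$ for continuous $\Phi$, essential spectrum $=\Phi(\T)$, Fredholm index $=-\mathrm{ind}(\Phi)$). The paper itself treats the corollary as an ``immediate consequence'' and does not spell out the converse; you have correctly identified that Theorem \ref{ContinuityToeplitz} as stated only gives one implication and that the full characterization requires the additional Fredholm-theoretic input you describe.
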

We are going to use this argument to prove Theorem \ref{totally_geometric}.
\subsection{Totally geometric spectral data and Toeplitz operators}
We claim that in the case of totally geometric spectral data, the explicit formula giving $u_N$ involves the inverse of a truncated Toeplitz operator.
From direct computation, one has 
\begin{eqnarray*}\mathscr C_N(z)&=&\left( \frac{\omega ^{2j-1}-z\omega^{2k}}{|\omega|^{4j-2}-|\omega|^{4k}}\right)_{1\le j,k\le N}\\
&=&\left(\frac{1}{\overline \omega ^{2j-1}}\frac{1-z\omega^{2(k-j)+1}}{1-|\omega|^{4(k-j)+2}}\right)_{1\le j,k\le N}.
\end{eqnarray*}
where $\omega={\rm e}^{-h(1-i\theta)}$.
In that case, if $T_N(z)$ and $T_{N,r}(z)$ denote the matrices
$$T_N(z)=\left(\frac{1-z\omega^{2(k-j)+1}}{1-|\omega|^{4(k-j)+2}}\right)_{1\le j,k\le N}$$
and 
$$ T_{N,r}(z)=\left(r^{k-j}\frac{1-z\omega^{2(k-j)+1}}{1-|\omega|^{4(k-j)+2}}\right)_{1\le j,k\le N}$$
we get from our explicit formula, for any $r>0$,
$$
u_N(z)=<T_N(z)^{-1}(\overline \omega ^{2j-1}),{\bf 1}>=<T_{N,r}(z)^{-1}(r^{-j}\overline \omega ^{2j-1})_{1\le j\le N}),(r^k)_{1\le k\le N}>.
$$
We consider for $|\zeta|=r$, $|\omega|^2<r<1$, $z\in\C$, the symbol
$$\Phi(z,\zeta):=\sum_{\ell\in\Z}\frac{1-z\omega^{2\ell +1}}{1-|\omega|^{4\ell+2}}\zeta^\ell.$$
The transpose of the matrix $$\left(r^{k-j}\frac{1-z\omega^{2(k-j)+1}}{1-|\omega|^{4(k-j)+2}}\right)_{ j,k\ge 1}$$ corresponds to the matrix of the Toeplitz operator of symbol $$\Phi(z,r\cdot):\zeta\mapsto \Phi(z,r\zeta).$$
We  are going to prove the following result.
\begin{proposition}\label{invertibility}
There exist $|\omega|^2<r<1$ and $\rho>0$ such that the function $\zeta \mapsto \Phi(z,r\zeta)$ has no zero and has index $0$ on the the unit circle,  for every $z$ such that  $|z|<1+\rho$. 
\end{proposition}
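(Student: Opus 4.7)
The plan is to disentangle the dependence of $\Phi$ on $z$ and $\zeta$ and reduce the analysis to a one-variable meromorphic object. Writing $q:=|\omega|^2$ and $\xi:=\omega^2$, the series splits as
$$\Phi(z,\zeta)=S(\zeta)-z\omega\,S(\xi\zeta),\qquad S(\eta):=\sum_{\ell\in\Z}\frac{\eta^\ell}{1-q^{2\ell+1}}.$$
The Laurent series $S$ converges in the annulus $q^2<|\eta|<1$ and extends to a meromorphic function on $\C^*$ whose only (simple) poles lie at $\eta=q^{2k}$, $k\in\Z$, as seen from the rearrangement
$$S(\eta)=\sum_{k\geq 0}\frac{q^k}{1-\eta q^{2k}}-\sum_{k\geq 1}\frac{q^k}{\eta-q^{2k}}.$$
The substitution $\ell\mapsto-\ell-1$ in the defining series furnishes the functional equation $S(q^2/\mu)=-(\mu/q)\,S(\mu)$. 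Applied with $\mu=\xi\zeta$ (using $q^2/\xi=\bar\omega^2$), this rewrites $\Phi$ in the convenient alternative form
$$\Phi(z,\zeta)=S(\zeta)+\frac{z\bar\omega}{\zeta}\,S(\bar\omega^2/\zeta),$$
in which both arguments of $S$ lie in the annulus of convergence whenever $q<|\zeta|<1$.

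Next, I would apply the Poisson summation formula to the Lambert-type kernel defining $S$, after subtracting the singular contribution $1/(1-\eta)$ responsible for the pole at $\eta=1$. This converts the slowly-decaying Laurent coefficients into a rapidly convergent (theta-type) series, from which I would derive on a suitable circle $|\eta|=r$, with $q<r<1$, both a uniform lower bound for $|S(\zeta)|$ and an upper bound for $|S(\bar\omega^2/\zeta)|$, yielding the strict inequality
$$|\omega|\,\sup_{|\zeta|=r}\frac{|S(\bar\omega^2/\zeta)|}{|S(\zeta)|}<1.$$
A direct pairing $\ell\leftrightarrow-\ell-1$ in the defining series shows $S(q)=0$, while the monotonicity of $S$ on the positive real axis (from $-\infty$ near $\eta=q^{2+}$ to $+\infty$ near $\eta=1^-$) together with the Poisson representation should show this is the only zero of $S$ in $\{q^2<|\eta|<1\}$.

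Given these ingredients, the proposition follows. By compactness, the strict inequality above survives after multiplying its left-hand side by $1+\rho$ for some $\rho>0$. Consequently, for $|z|<1+\rho$, the second factor in $\Phi(z,\zeta)=S(\zeta)\bigl(1-z\omega\,S(\xi\zeta)/S(\zeta)\bigr)$ stays in the open disc $\{|w-1|<1\}\subset\{\mathrm{Re}\,w>0\}$, so it is non-vanishing and has index $0$ along $|\zeta|=r$. The index of $S$ on $|\zeta|=r$ is, in turn, computed by the annular argument principle: $S$ is holomorphic and non-vanishing on $\{q<|\eta|<1\}$; on any circle $|\eta|=r'\in(q^2,q)$ the dominant behaviour $S(\eta)\sim-q/(\eta-q^2)$ near the enclosed pole yields index $-1$; crossing the simple zero at $\eta=q$ as $r'$ increases through $q$ raises the index by $+1$, so the index on $|\eta|=r$ equals $0$. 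Hence $\zeta\mapsto\Phi(z,r\zeta)$ has no zero on the unit circle and index $0$, as claimed.

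\smallskip
\noindent\textbf{Main obstacle.} The heart of the argument is the Poisson-summation-based estimate of $S$, specifically producing the uniform lower bound on $|S|$ and the key ratio inequality. The natural choice $r=\sqrt{q}$ makes the involution $\zeta\mapsto\bar\omega^2/\zeta$ an automorphism of the circle and reduces the comparison to a single-circle estimate; unfortunately, at the fixed points $\pm\bar\omega$ of this involution the expression for $\Phi$ degenerates to $S(\pm\bar\omega)(1\pm z)$, which vanishes at $z=\mp 1$ and so precludes $r=\sqrt{q}$ from working for all $|z|<1+\rho$. One must therefore tune $r$ slightly away from $\sqrt{q}$, and reconciling this tuning with the quantitative Poisson bounds is the delicate part of the proof.
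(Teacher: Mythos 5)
Your decomposition $\Phi(z,\zeta)=S(\zeta)-z\omega\,S(\omega^2\zeta)$, the functional equation for $S$ (the paper's $F_\gamma$), and the plan to control $S$ by applying the Poisson summation formula to the Lambert-type kernel are exactly the paper's strategy for the non-vanishing half of the proposition (Lemma \ref{NonVanishing}); and your worry about $r=\sqrt{q}$ is avoidable --- the paper compares $\min_{|\zeta|=1}|F_\gamma|$ against $\gamma^{1/2}\max_{|\eta|=\gamma}|F_\gamma|$, i.e.\ works at $r=1$ and $|z|\le 1$ directly, and then obtains both $r<1$ and $|z|<1+\rho$ by a continuity/compactness argument, which sidesteps the fixed-point degeneration you describe.

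The index argument, however, has a genuine gap, in two places. First, ``monotonicity of $S$ on the positive real axis'' rules out zeros only on $(q,1)\cap\R$, not complex zeros in the annulus $q<|\eta|<1$, so the hypothesis that $S$ is zero-free there (and that $\eta=q$ is the only zero in $q^{2}<|\eta|<1$) is not established. Second, and more seriously, you cannot read off the index on a circle $|\eta|=r'\in(q^{2},q)$ from the ``dominant behaviour near the enclosed pole'': the meromorphic extension of $S$ has simple poles at every $q^{2\ell}$ with $\ell\ge 1$, accumulating at the essential singularity at $0$, so that circle encloses infinitely many poles and the argument principle gives no finite count; local behaviour near one pole does not determine the total winding. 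The paper's Lemma \ref{index} resolves both difficulties at once by exploiting the two index identities coming from the functional equations, namely $I(R)+I(1/R)=-1$ and $I(R\gamma^{2})=I(R)$, and combining them with a count of zeros and poles on the circles $|\eta|=\gamma$ and $|\eta|=1$. Writing $n$ for the number of zeros in $\gamma<|\eta|<1$, $m$ for the number on $|\eta|=\gamma$, and $N$ for the number on $|\eta|=1$, these identities force $N+2n+m=1$; since $m\ge 1$ (because $F_\gamma(\gamma)=0$), one concludes $n=N=0$, $m=1$, and then the same relations give $I(1^{-})=0$. This functional-equation bookkeeping is the key input your proposal is missing; without it, the index cannot be pinned down.
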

Assuming this result  proved, we obtain by Corollary \ref{Baxterbound} that, uniformly in $z$, $|z|<1+\rho$, $\Vert T_{N,r}(z)^{-1}\Vert_{\ell^2\to \ell^2}$ is bounded (or more precisely the norm of its transpose is bounded). As $|\omega|^2<r<1$, we obtain that the sequence $(u_N(z))_N$ with
$$u_N(z)=<T_{N,r}(z)^{-1}(r^{-j}\overline \omega ^{2j-1})_{1\le j\le N}),(r^k)_{1\le k\le N}>$$
is uniformly bounded and converge to $u(z)$ for any $z$, $|z|<1+\rho$. We conclude as in the previous section. This ends the proof of Theorem \ref{totally_geometric}.
\vskip 0.25cm
It remains to prove Proposition \ref{invertibility}, which is the object of the next subsections.
As a preliminary, observe that, for $|\omega|^2<|\zeta|<1$, $\gamma=|\omega|^2$, $$\Phi(z,\zeta)=F_\gamma(\zeta)-z\omega F_\gamma(\zeta\omega^2),$$
where 
\begin{equation}\label{F_gamma}F_\gamma(\zeta)=\Phi(0,\zeta)=\sum_{j\in\Z}\frac{\zeta^j}{1-\gamma^{2j+1}},\; \gamma=|\omega|^2.
\end{equation}
We collect some basic properties of function $F_\gamma$ in the following lemma.
\begin{lemma}
The function $F_\gamma$ has a meromorphic extension in $\C\setminus\{0\}$ given by 
\begin{equation}\label{F_gamma2}
F_\gamma(\zeta)=\sum_{\ell\in\Z} \frac{\gamma^\ell}{1-\zeta \gamma^{2\ell}}.
\end{equation}
Its only poles in $\C\setminus\{0\}$ are the $\gamma^{2\ell}$'s, $\ell \in \Z$ and $F_\gamma(\gamma^{2\ell+1})=0$, $\ell \in \Z$. 
Furthermore
 \begin{equation}\label{properties}
F_\gamma\left(\frac 1\zeta\right)=-\zeta F_\gamma(\zeta),\; F_\gamma\left(\frac\zeta{\gamma^2}\right)=\gamma F_\gamma(\zeta).
\end{equation}
\end{lemma}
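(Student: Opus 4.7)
My strategy is to set
$$G(\zeta):=\sum_{\ell\in\Z}\frac{\gamma^\ell}{1-\zeta\gamma^{2\ell}},$$
show directly that $G$ defines a meromorphic function on $\C\setminus\{0\}$ with simple poles only at the points $\gamma^{2k}$, $k\in\Z$, check that $G$ coincides with $F_\gamma$ on the original annulus $\gamma<|\zeta|<1$, and finally read off the two functional equations and the zeros from the $G$-representation. For the convergence, on any compact set $K\subset\C\setminus\{0\}$ avoiding the candidate poles one has $\gamma^\ell/(1-\zeta\gamma^{2\ell})=O(\gamma^\ell)$ as $\ell\to+\infty$, while for $\ell\to-\infty$ the rewriting
$$\frac{\gamma^\ell}{1-\zeta\gamma^{2\ell}}=-\frac{1}{\zeta}\cdot\frac{\gamma^{-\ell}}{1-\zeta^{-1}\gamma^{-2\ell}}=O(\gamma^{|\ell|})$$
yields summable decay on $K$. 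Uniform convergence of meromorphic terms then delivers a meromorphic limit with the announced pole set.

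The identification $G=F_\gamma$ on the annulus is proved by computing the Laurent expansion of $G$. Each summand is expanded geometrically according to whether $|\zeta\gamma^{2\ell}|<1$ (case $\ell\geq 0$, expansion in positive powers of $\zeta$: $\sum_{j\geq 0}\gamma^{(2j+1)\ell}\zeta^j$) or $|\zeta\gamma^{2\ell}|>1$ (case $\ell\leq -1$, expansion in negative powers: $-\sum_{j\geq 1}\gamma^{-(2j-1)\ell}\zeta^{-j}$). A short geometric bookkeeping using $|\zeta|<1$ in the first part and $|\zeta|>\gamma$ in the second gives absolute convergence of the resulting double sums, so the two summations can be exchanged; summing in $\ell$ produces the coefficient $(1-\gamma^{2j+1})^{-1}$ of $\zeta^j$ for $j\geq 0$ and, after reindexing $m=-\ell$ together with the elementary identity $\gamma^{2j-1}/(1-\gamma^{2j-1})=-(1-\gamma^{1-2j})^{-1}$, the same coefficient for $j\leq -1$. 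This reproduces the Laurent series of $F_\gamma$ term by term, so $G$ is the meromorphic continuation of $F_\gamma$.

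Both functional equations are then direct relabellings of the $G$-series. For $\zeta\mapsto 1/\zeta$, I multiply numerator and denominator in each summand by $\zeta$ to obtain $G(1/\zeta)=-\zeta\sum_\ell\gamma^\ell/(\gamma^{2\ell}-\zeta)$; substituting $\ell\mapsto -\ell$ and clearing $\gamma^{2\ell}$ in both numerator and denominator of each summand returns $-\zeta G(\zeta)$. For $\zeta\mapsto\zeta/\gamma^2$, the shift $\ell\mapsto\ell+1$ in the index of summation extracts a factor $\gamma$ and gives $\gamma G(\zeta)$. The zeros follow by combining these two identities: the first evaluated at $\zeta=\gamma^{-1}$ gives $F_\gamma(\gamma)=-\gamma^{-1}F_\gamma(\gamma^{-1})$, the second evaluated at $\zeta=\gamma$ gives $F_\gamma(\gamma^{-1})=\gamma F_\gamma(\gamma)$, hence $F_\gamma(\gamma)=-F_\gamma(\gamma)=0$; iteration of the second equation propagates this zero to every $\gamma^{2\ell+1}$, $\ell\in\Z$.

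The only delicate step is the bookkeeping in the second paragraph — verifying absolute convergence of the double series and correctly matching the negative-index Laurent coefficients through the sign-flipping algebraic identity. Apart from that, every assertion of the lemma reduces to a formal relabelling of a single bilateral series.
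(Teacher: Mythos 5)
Your proof is correct and follows essentially the same route as the paper's: the paper derives the bilateral representation \eqref{F_gamma2} directly from the defining Laurent series by splitting at $j=0$, expanding each $\frac{1}{1-\gamma^{\pm(2j+1)}}$ geometrically, and interchanging the order of summation; your argument runs the identical computation in the reverse direction (define the bilateral sum $G$, check convergence and the pole set, then match Laurent coefficients). You additionally supply the derivation of the functional equations by index shifts and the deduction of the zeros $F_\gamma(\gamma^{2\ell+1})=0$ by combining the two functional equations, which the paper dismisses as ``elementary consequences''; both of these added steps are correct, and your evaluation points $\zeta=\gamma^{\pm 1}$ avoid the pole set, so the substitutions are legitimate.
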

\begin{proof}
Let us give another expression of $F_\gamma$. By assumption, $$\gamma<|\zeta|< 1$$ hence, $|\zeta|>\gamma^2$ and
\begin{eqnarray*}
F_\gamma(\zeta)&=&\sum_{j=0}^\infty\frac{\zeta^j}{1-\gamma^{2j+1}}+\sum_{j=0}^\infty \frac{\zeta^{-j-1}}{1-\gamma^{-2j-1}}\\
&=&\sum_{j=0}^\infty \zeta^j\sum_{\ell=0}^\infty \gamma^{(2j+1)\ell}-\sum_{j=0}^\infty \frac{\zeta^{-j-1}\gamma^{2j+1}}{1-\gamma^{2j+1}}\\
&=&\sum_{\ell=0}^\infty \gamma^\ell\sum_{j=0}^\infty (\zeta\gamma^{2\ell})^j-\gamma\zeta^{-1}\sum_{\ell=0}^\infty \gamma^\ell\sum_{j=0}^\infty (\zeta^{-1}\gamma^{2\ell+2})^j\\
&=&\sum_{\ell=0}^\infty \frac{\gamma^\ell}{1-\zeta \gamma^{2\ell}}-\sum_{\ell=0}^\infty \frac{ \gamma^{\ell+1}}{\zeta-\gamma^{2\ell+2}}
\end{eqnarray*}
Hence, \begin{equation}\label{F_gamma2}
F_\gamma(\zeta)=\sum_{\ell\in\Z} \frac{\gamma^\ell}{1-\zeta \gamma^{2\ell}}.
\end{equation}
The other properties are elementary consequences of this equality.
\end{proof}
{
\begin{remark}\label{elliptic}
Set $\gamma ={\rm e}^{-\pi \tau}\ ,\ \tau >0$. From the second identity \eqref{properties}, we observe that the meromorphic function
$$G_\tau (w)={\rm e}^{2i\pi w}\left (F_\gamma ({\rm e}^{2i\pi w})\right )^2$$
satisfies 
$$\forall \lambda \in \Z +i\tau \Z\ ,\ ,G_\tau (w+\lambda )=G_\tau (w)\ ,$$
which means that $G_\tau $ is an elliptic function relative to the lattice $\Z +i\tau \Z$. Since $G_\tau $ has only double poles at the lattice points, with singularity
$$\frac{1}{(\zeta -1)^2}\sim -\frac{1}{4\pi^2w^2}$$
at $w=0$, and since it cancels at points $i\frac \tau 2+\Z +i\tau \Z$, we infer that
$$G_\tau (w)=-\frac{1}{4\pi^2}\left (\mathfrak{P}_\tau (w)-\mathfrak{P}_\tau \left (i\frac \tau 2\right ) \right )\ ,$$
where $$\mathfrak{P}_\tau (w)=\frac{1}{w^2}+\sum_{\lambda \ne 0}\left (\frac{1}{(w-\lambda )^2}-\frac{1}{\lambda ^2}\right )$$ denotes the Weierstrass $\mathfrak P$ function  relative to the lattice $\Z +i\tau \Z$. See e.g. \cite{SZ}
\end{remark}
}
\subsection{Ruling out the zeroes on the unit circle}
{In this section, we prove the following lemma.}
\begin{lemma}\label{ZeroesPhi}
There exists $\rho>0$ such that $\Phi(z,\zeta)$ does not vanish in a neighbourhood of the circle
 $|\zeta|=1$ for any $z$ such that $|z|\leq 1+\rho$.
\end{lemma}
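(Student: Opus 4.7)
The plan is to treat $\Phi(z,\zeta)=F_\gamma(\zeta)-z\omega F_\gamma(\zeta\omega^2)$ as a small perturbation of $F_\gamma(\zeta)$: I will show that $F_\gamma$ is bounded below in a quantitative sense in an open annulus containing $\T$ (handling the pole at $\zeta=1$ separately), while the second term stays uniformly bounded there, so that for $\rho$ small enough no cancellation can occur. The partial fraction expansion \eqref{F_gamma2} locates the zeros of $F_\gamma$ in $\C^*$ at the points $\gamma^{2\ell+1}$ and its poles at $\gamma^{-2\ell}$, $\ell\in\Z$; all of them lie on the positive real axis, and in a thin annulus $\gamma^{1/2}<|\zeta|<\gamma^{-1/2}$ only the single pole at $\zeta=1$ survives, the zeros being kept out.

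The crucial step is to verify that $F_\gamma$ has no zero on $\T\setminus\{1\}$, and this is where Remark~\ref{elliptic} becomes essential. Parametrizing $\zeta=e^{2i\pi w}$ with $w\in\R$, the identity
$$G_\tau(w)=\zeta\bigl(F_\gamma(\zeta)\bigr)^2=-\frac{1}{4\pi^2}\bigl(\mathfrak{P}_\tau(w)-\mathfrak{P}_\tau(i\tau/2)\bigr)$$
shows that $F_\gamma(\zeta)=0$ would force $\mathfrak{P}_\tau(w)=\mathfrak{P}_\tau(i\tau/2)$. Since $i\tau/2$ is a half-period of the rectangular lattice $\Z+i\tau\Z$, the value $\mathfrak{P}_\tau(i\tau/2)$ is a branch value of $\mathfrak{P}_\tau$, attained only on the coset $i\tau/2+\Z+i\tau\Z$, which does not meet the real axis. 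Hence $G_\tau(w)\neq 0$ for $w\in\R\setminus\Z$, and $F_\gamma$ has no zero on $\T\setminus\{1\}$.

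To assemble the non-vanishing of $\Phi$, I would split a tubular neighborhood of $\T$ into two regions. On $\{1-\eta<|\zeta|<1+\eta\}\cap\{|\zeta-1|\geq\delta\}$, compactness together with the previous step gives a uniform lower bound $|F_\gamma(\zeta)|\geq c>0$; since $\zeta\omega^2$ then lies in a compact subset of $\C^*$ at positive distance from the poles $\gamma^{-2\ell}$ of $F_\gamma$, one has $|F_\gamma(\zeta\omega^2)|\leq M$ there, and taking $\rho$ with $(1+\rho)|\omega|M<c$ yields $\Phi(z,\zeta)\neq 0$. On the small disc $\{|\zeta-1|<\delta\}$, isolating the $\ell=0$ term gives $F_\gamma(\zeta)=(1-\zeta)^{-1}+g(\zeta)$ with $g$ holomorphic at $1$, so that $(1-\zeta)\Phi(z,\zeta)\to 1$ as $\zeta\to 1$ uniformly for $|z|\leq 1+\rho$; shrinking $\delta$ then forces $\Phi(z,\zeta)\neq 0$ in a smaller disc around $\zeta=1$ as well. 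The main obstacle is the elliptic identification in the second paragraph: without Remark~\ref{elliptic}, the non-vanishing of the explicit series $F_\gamma$ on the entire unit circle does not seem to follow from elementary considerations on the series itself.
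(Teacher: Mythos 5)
Your overall architecture — treating $\Phi(z,\zeta)=F_\gamma(\zeta)-z\omega F_\gamma(\zeta\omega^2)$ perturbatively, handling the pole at $\zeta=1$ separately, and reducing to a lower bound on $|F_\gamma|$ on $\T$ versus an upper bound near $|\zeta|=\gamma$ — is the right shape, and your elliptic-function argument via $G_\tau$ and the Weierstrass $\mathfrak P$ function to show $F_\gamma\neq 0$ on $\T\setminus\{1\}$ is valid (the paper explicitly alludes to this route in the proof of the index lemma). However there is a genuine gap at the decisive step. When you write ``taking $\rho$ with $(1+\rho)|\omega|M<c$,'' you are tacitly assuming that $|\omega|\,M < c$, i.e.\ (letting the tubular neighbourhood shrink) that
\[
\gamma^{1/2}\max_{|\zeta|=\gamma}|F_\gamma(\zeta)| \;<\; \min_{|\zeta|=1}|F_\gamma(\zeta)|\ .
\]
Nothing in your argument establishes this. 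The elliptic-function step only yields that $c>0$; separate compactness gives $M<\infty$; but $c$ and $M$ are produced by completely disjoint considerations and could a priori satisfy $|\omega|M\geq c$, in which case no admissible $\rho>0$ exists. This strict inequality between the maximum on $|\zeta|=\gamma$ weighted by $\gamma^{1/2}$ and the minimum on $|\zeta|=1$ is precisely the content of the paper's Lemma~\ref{NonVanishing}, proved by an explicit computation of $|F_\gamma|$ on the two circles, Poisson summation applied to $f_{\gamma,\theta}$, and a residue evaluation of its Fourier transform leading to the lower bound $\frac{\pi}{|\log\gamma|}\sum_{n\ge1}\mathrm{sech}\bigl(\tfrac{\pi^2 n}{\log\gamma}\bigr)>0$ for the gap. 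Without some quantitative input of that kind, your proof does not close; the qualitative non-vanishing of $F_\gamma$ on $\T\setminus\{1\}$ is strictly weaker than what is needed.
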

Lemma \ref{ZeroesPhi} {is} a consequence of  the following result. 
\begin{lemma}\label{NonVanishing}
For every $\gamma \in (0,1)$,
$$\gamma ^{1/2}\max _{|\zeta|=\gamma}|F_\gamma (\zeta)|<\min_{|\zeta|=1}|F_\gamma (\zeta)|\ .$$
\end{lemma}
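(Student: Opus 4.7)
The plan is to exploit the elliptic-function identity from Remark \ref{elliptic}. With $\gamma=e^{-\pi\tau}$, that remark states
$$G_\tau(w):= e^{2\pi i w}\bigl(F_\gamma(e^{2\pi i w})\bigr)^2 \;=\; -\tfrac{1}{4\pi^2}\bigl(\mathfrak{P}_\tau(w)-e_2\bigr),\qquad e_2:=\mathfrak{P}_\tau(i\tau/2).$$
Parametrising $\zeta=e^{2\pi iw}$ with $w=x+iy$, so that $|\zeta|=e^{-2\pi y}$, the unit circle corresponds to $y=0$ and $|\zeta|=\gamma$ to $y=\tau/2$, and the identity above gives $|F_\gamma(\zeta)|^2 = e^{2\pi y}|G_\tau(w)|$. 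After squaring, the desired inequality reduces to
$$\max_{w\in \R+i\tau/2}\bigl|\mathfrak{P}_\tau(w)-e_2\bigr|\;<\;\min_{w\in\R}\bigl|\mathfrak{P}_\tau(w)-e_2\bigr|.$$

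Next, I would use two standard features of $\mathfrak{P}_\tau$ for the real rectangular lattice $\Z+i\tau\Z$. First, $\mathfrak{P}_\tau$ is real-valued on each of the horizontal lines $\R$ and $\R+i\tau/2$, by combining the symmetry $\mathfrak{P}_\tau(\bar w)=\overline{\mathfrak{P}_\tau(w)}$ (the lattice is conjugation-invariant) with $i\tau$-periodicity. Second, the critical points of $\mathfrak{P}_\tau$ modulo the lattice are exactly the three half-periods, at which it takes the distinct real values $e_1:=\mathfrak{P}_\tau(1/2)$, $e_2$ and $e_3:=\mathfrak{P}_\tau((1+i\tau)/2)$. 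The line $\R\setminus\Z$ has a unique critical point per period (at $w=1/2$) and $\mathfrak{P}_\tau\to+\infty$ at the poles, so this point is a global minimum there; hence $\mathfrak{P}_\tau\geq e_1$ on $\R\setminus\Z$ and $\min_\R|\mathfrak{P}_\tau-e_2|=e_1-e_2$. The line $\R+i\tau/2$ is pole-free, and $\mathfrak{P}_\tau$ is continuous and $1$-periodic there, with critical values $e_2$ and $e_3$ only, so it oscillates between them and $\max_{\R+i\tau/2}|\mathfrak{P}_\tau-e_2|=e_3-e_2$.

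The lemma is thus equivalent to the strict inequality $e_3<e_1$. I would establish this by restricting to the vertical segment $1/2+i[0,\tau/2]$, on which $\mathfrak{P}_\tau$ is again real-valued (by the same conjugation-and-periodicity argument) and strictly monotonic in the open interior (its only critical points on the line $1/2+i\R$ modulo the lattice are the endpoints). Since $w=1/2$ is a global minimum of $\mathfrak{P}_\tau$ on $\R\setminus\Z$, its Taylor expansion at $1/2$ along the real axis forces $\mathfrak{P}_\tau''(1/2)\geq 0$; using the classical identity $\mathfrak{P}_\tau''(1/2)=6e_1^2-g_2/2=4e_1^2+2e_2 e_3$ (obtained from $g_2=-4(e_1e_2+e_2e_3+e_3e_1)$ together with $e_1+e_2+e_3=0$), a short algebraic check rules out the equality case: $4e_1^2+2e_2 e_3=0$ would force $\{e_2,e_3\}=\{e_1,-2e_1\}$, contradicting the distinctness of the $e_i$'s. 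Hence $\mathfrak{P}_\tau''(1/2)>0$, whence $\mathfrak{P}_\tau(1/2+it)=e_1-\tfrac{1}{2}\mathfrak{P}_\tau''(1/2)t^2+O(t^4)<e_1$ for small $t>0$; monotonicity along the vertical segment then extends this to $e_3=\mathfrak{P}_\tau(1/2+i\tau/2)<e_1$. The main obstacle is this final sign analysis, which has to be argued uniformly in $\tau>0$; everything else is routine bookkeeping with the elliptic identity and the reflection symmetries of $\mathfrak{P}_\tau$.
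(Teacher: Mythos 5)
Your approach is genuinely different from the paper's. The authors establish the inequality by rewriting both $\min_{|\zeta|=1}|F_\gamma|$ and $\gamma^{1/2}\max_{|\zeta|=\gamma}|F_\gamma|$ as explicit series, applying the Poisson summation formula and a residue computation to obtain a closed formula for the Fourier transform involved, and then bounding the resulting difference of series from below by $\frac{\pi}{|\log\gamma|}\sum_{n\ge 1}\cosh(\pi^2 n/\log\gamma)^{-1}>0$ using the alternating-series test. You instead pass through Remark \ref{elliptic} and reduce everything to the ordering of the critical values of the Weierstrass $\mathfrak{P}$ function for the rectangular lattice $\Z+i\tau\Z$. (The paper in fact notes, apropos of Lemma \ref{index}, that it deliberately avoids the Weierstrass reduction for expositional reasons; you show it does work here.) One trade-off: the paper's computation gives a quantitative lower bound for the gap, while yours only gives strictness via $\mathfrak{P}''(1/2)>0$. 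One small plus on your side: your concluding worry about ``uniformity in $\tau$'' is unnecessary, since the lemma is stated for a single fixed $\gamma$.

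There is, however, a genuine gap in your reduction. You assert that $\max_{w\in\R+i\tau/2}|\mathfrak{P}_\tau(w)-e_2|=e_3-e_2$ and that the lemma is \emph{equivalent} to $e_3<e_1$. The first equality requires $e_3>e_2$, and the claimed equivalence fails without it: in general $\max_{\R+i\tau/2}|\mathfrak{P}_\tau-e_2|=|e_3-e_2|$, and ``$|e_3-e_2|<e_1-e_2$'' unwinds (using $e_1+e_2+e_3=0$) to the \emph{conjunction} $e_3<e_1$ \emph{and} $e_2<0$, which is equivalent to the full ordering $e_2<e_3<e_1$. You carefully prove $e_3<e_1$ from $\mathfrak{P}''(1/2)>0$ together with monotonicity along the vertical half-period segment, but nowhere do you prove $e_2<e_3$ (nor even $e_1>e_2$, although that actually falls out of your own computation: since $\mathfrak{P}''(1/2)=2(e_1-e_2)(e_1-e_3)>0$ and you have shown $e_1-e_3>0$, also $e_1-e_2>0$). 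So as written the proof does not close. The missing inequality $e_2<e_3$ is true for rectangular lattices, and can be obtained by the exact mirror of your argument at the other half-period: on the imaginary axis $\mathfrak{P}_\tau(it)$ is real, tends to $-\infty$ at the poles $t\in\tau\Z$, and has its unique critical point on $i(0,\tau)$ at $t=\tau/2$; hence $i\tau/2$ is a maximum along $i\R$, giving $\mathfrak{P}_\tau''(i\tau/2)\ge 0$, and $\mathfrak{P}_\tau''(i\tau/2)=2(e_2-e_1)(e_2-e_3)\ne 0$ forces $\mathfrak{P}_\tau''(i\tau/2)>0$; expanding $\mathfrak{P}_\tau(i\tau/2+x)=e_2+\tfrac12\mathfrak{P}_\tau''(i\tau/2)x^2+O(x^4)>e_2$ for small real $x>0$ and using monotonicity along $[i\tau/2,(1+i\tau)/2]$ yields $e_3>e_2$. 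Alternatively you could simply cite the classical fact that $\mathfrak{P}(1/2)>\mathfrak{P}((1+i\tau)/2)>\mathfrak{P}(i\tau/2)$ for rectangular lattices. Either way, once that step is supplied the proof is correct.
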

\begin{proof}
First of all we rewrite both sides of the above inequality. If $\zeta={\rm e}^{i\theta}$,
\begin{eqnarray*}
F_\gamma (\zeta)&=&\sum_{k=0}^\infty \frac{\gamma ^k}{1-\gamma ^{2k}{\rm e}^{i\theta}}+\sum_{\ell =1}^\infty \frac{\gamma ^{-\ell}}{1-\gamma ^{-2\ell }{\rm e}^{i\theta}}\\
&=&\sum_{k=0}^\infty \frac{\gamma ^k}{1-\gamma ^{2k}{\rm e}^{i\theta}}+\sum_{\ell=1}^\infty \frac{\gamma ^{\ell }}{\gamma ^{2\ell }-{\rm e}^{i\theta}}\\
&=&\frac{1}{1-{\rm e}^{i\theta}}+\sum_{\ell=1}^\infty\frac{\gamma^{\ell}(1+\gamma^{2\ell})(1-{\rm e}^{-i\theta})}{1+\gamma^{4\ell}-2\gamma^{2\ell}\cos\theta}\\
&=&(1-{\rm e}^{-i\theta})\left ( \frac{1}{2(1-\cos\theta)}+ \sum_{\ell=1}^\infty\frac{\gamma^{\ell}(1+\gamma^{2\ell})}{1+\gamma^{4\ell}-2\gamma^{2\ell}\cos\theta} \right ),
\end{eqnarray*}
hence
\begin{eqnarray*}
|F_\gamma (\zeta)|&=&\frac{1}{2|\sin(\theta/2)|}+2|\sin(\theta/2)| \sum_{\ell=1}^\infty\frac{\gamma^{\ell}(1+\gamma^{2\ell})}{1+\gamma^{4\ell}-2\gamma^{2\ell}\cos\theta}\\
&=&|\sin(\theta/2)|\sum_{\ell \in \Z}\frac{\gamma^{\ell}(1+\gamma^{2\ell})}{1+\gamma^{4\ell}-2\gamma^{2\ell}\cos\theta}\ .
\end{eqnarray*}
Similarly, if $\zeta=\gamma {\rm e}^{i\phe}$, we have
\begin{eqnarray*}
F_\gamma (\zeta)&=&\sum_{k=0}^\infty \frac{\gamma ^k}{1-\gamma ^{2k+1}{\rm e}^{i\phe}}+\sum_{\ell =0}^\infty \frac{\gamma ^{-\ell-1}}{1-\gamma ^{-2\ell -1}{\rm e}^{i\phe}}\\
&=&\sum_{k=0}^\infty \frac{\gamma ^k}{1-\gamma ^{2k+1}{\rm e}^{i\phe}}+\sum_{\ell=0}^\infty \frac{\gamma ^{\ell }}{\gamma ^{2\ell +1}-{\rm e}^{i\phe}}\\
&=&\sum_{\ell=0}^\infty\frac{\gamma^{\ell}(1+\gamma^{2\ell+1})(1-{\rm e}^{-i\phe})}{1+\gamma^{4\ell+2}-2\gamma^{2\ell+1}\cos\phe},
\end{eqnarray*}
so that
\begin{eqnarray*}
|F_\gamma (\zeta)|&=&2|\sin(\phe/2)| \sum_{\ell=0}^\infty\frac{\gamma^{\ell}(1+\gamma^{2\ell+1})}{1+\gamma^{4\ell+2}-2\gamma^{2\ell+1}\cos\phe}\\
&=&|\sin(\phe/2)|\sum_{\ell \in \Z}\frac{\gamma^{\ell}(1+\gamma^{2\ell+1})}{1+\gamma^{4\ell+2}-2\gamma^{2\ell+1}\cos\phe}\ .
\end{eqnarray*}
Consequently,
\begin{eqnarray*}
\min_{|\zeta|=1}|F_\gamma (\zeta)|-\gamma^{1/2}\max _{|\zeta|=\gamma}|F_\gamma (\zeta)|&=&\min_{\theta \in \T}|\sin(\theta/2)|\sum_{\ell \in \Z}\frac{\gamma^{\ell}(1+\gamma^{2\ell})}{1+\gamma^{4\ell}-2\gamma^{2\ell}\cos\theta}\\
&&-\max_{\phe \in \T}|\sin(\phe/2)|\sum_{\ell \in \Z}\frac{\gamma^{\ell+1/2}(1+\gamma^{2\ell+1})}{1+\gamma^{4\ell+2}-2\gamma^{2\ell+1}\cos\phe}
\end{eqnarray*}
Set, for $x\in \R, \theta \in \T\setminus \{0\}$,
$$f_{\gamma ,\theta} (x)=|\sin(\theta/2)|\frac{\gamma ^x(1+\gamma^{2x})}{1+\gamma^{4x}-2\gamma^{2x}\cos\theta}\ .$$
Then we are reduced to proving that
$$\inf_{\theta \in \T\setminus \{0\}}\sum_{k\in \Z}f_{\gamma ,\theta}(k)-\sup_{\phe \in \T\setminus \{0\} }\sum_{k\in \Z}f_{\gamma ,\phe}\left (k+\frac 12\right )>0\ .$$
Applying the Poisson summation formula, we have
$$\sum_{k\in \Z}f_{\gamma ,\theta}(k)=\sum_{n\in \Z}\hat f_{\gamma,\theta }(2\pi n)\ ,\ \sum_{k\in \Z}f_{\gamma ,\phe}\left (k+\frac 12\right )=\sum_{n\in \Z}(-1)^n\hat f_{\gamma,\phe }(2\pi n),$$
where 
\begin{eqnarray*}
\hat f_{\gamma, \theta }(\xi )&=&|\sin(\theta/2)|\int_\R \frac{\gamma ^x(1+\gamma^{2x})}{1+\gamma^{4x}-2\gamma^{2x}\cos\theta}{\rm e}^{-ix\xi}\, dx\\
&=&\frac{|\sin(\theta/2)|}{|\log \gamma |}\int_0^\infty \frac{(1+t^2)t^{-i\xi /\log \gamma}}{1+t^4-2t^2\cos\theta }\, dt\\
&=&\frac{|\sin(\theta/2)|}{2|\log \gamma |}\int_0^\infty \frac{(1+y)y^{-i\xi /2\log \gamma-1/2}}{1+y^2-2y\cos\theta }\, dy\ ,
\end{eqnarray*}
where we have set $t=\gamma ^x\ ,\ y=t^2$. We calculate the above integral by introducing the holomorphic function
$$g(z)=\frac{|\sin(\theta/2)|}{2|\log \gamma |}\frac{(1+z)z^{-i\xi /2\log \gamma-1/2}}{1+z^2-2z\cos\theta},$$
on the domain $\C\setminus \R_+$, where the argument of $z$ belongs to $(0,2\pi)$. Integrating on the contour
\begin{center}
\begin{tikzpicture}[scale=1.5]
\draw (0,0) circle (1.5);
\draw (0,0) circle (0.3);
\draw [double] (0.3,0)--(1.5,0);
\draw (1.5,0) node[right]{$R$};
\draw (0,0.3) node[above]{$\e$};
\end{tikzpicture}
\end{center}
and making $R\to \infty ,\e \to 0$, we obtain, by the residue theorem, assuming $\theta \in (0,2\pi)$ with no loss of generality,
\begin{eqnarray*}
\hat f_{\gamma, \theta }(\xi )\left (1+{\rm e}^{\pi \xi/\log\gamma}\right )&=&2i\pi \left [{\rm Res}\left (g(z),z={\rm e}^{i\theta}\right )+ {\rm Res}\left (g(z),z={\rm e}^{-i\theta}\right )\right ]\\
&=&\frac{i\pi \sin(\theta/2)}{|\log \gamma|}\left (\frac{ 2\cos (\theta/2)  }{2i\sin\theta }{\rm e}^{\theta \xi/2\log \gamma}+\frac{ 2\cos (\theta/2)  }{2i\sin\theta }{\rm e}^{(2\pi -\theta )\xi/2\log \gamma}\right )\\
&=&\frac{\pi}{2|\log \gamma|}\left ( {\rm e}^{\theta \xi/2\log \gamma}+  {\rm e}^{(2\pi -\theta )\xi/2\log \gamma}\right )\ .
\end{eqnarray*}
We infer
$$\hat f_{\gamma, \theta }(\xi )=\frac{\pi}{2|\log \gamma|}\frac{\cosh\left (\frac{(\pi -\theta)\xi}{2\log \gamma}\right )}{\cosh\left (\frac{\pi \xi}{2\log \gamma}\right )}\ ,\ \theta \in (0,2\pi)\ .$$
\s
Finally, for $\theta,\phe \in (0,2\pi)$,
\begin{eqnarray*}
&&\sum_{k\in \Z}f_{\gamma ,\theta}(k)-\sum_{k\in \Z}f_{\gamma ,\phe}\left (k+\frac 12\right )\\
&&=\frac{\pi}{2|\log \gamma|}\left (\sum_{n\in \Z}\frac{\cosh\left (\frac{(\pi -\theta)\pi n}{\log \gamma}\right )}{\cosh\left (\frac{\pi ^2n}{\log \gamma}\right )}-\sum_{n\in \Z}(-1)^n\frac{\cosh\left (\frac{(\pi -\phe)\pi n}{\log \gamma}\right )}{\cosh\left (\frac{\pi ^2n}{\log \gamma}\right )}\right )\\
&&=\frac{\pi}{|\log \gamma|}\left (\sum_{n=1}^\infty \frac{\cosh\left (\frac{(\pi -\theta)\pi n}{\log \gamma}\right )}{\cosh\left (\frac{\pi ^2n}{\log \gamma}\right )} +\sum_{n=1}^\infty (-1)^{n+1}\frac{\cosh\left (\frac{(\pi -\phe)\pi n}{\log \gamma}\right )}{\cosh\left (\frac{\pi ^2n}{\log \gamma}\right )}\right )\\
&&\geq \frac{\pi}{|\log \gamma|}\sum_{n=1}^\infty \frac{1}{\cosh\left (\frac{\pi ^2n}{\log \gamma}\right )}
\end{eqnarray*}
since the second series is an alternating series of the form
$$\sum_{n=1}^\infty  (-1)^{n+1} a_n\ ,$$
with $a_n$ decaying to $0$ as $n\to \infty$.
\s
Therefore
$$\min_{|\zeta|=1}|F_\gamma (\zeta)|-\gamma^{1/2}\max _{|\zeta|=\gamma}|F_\gamma (\zeta)|\geq \frac{\pi}{|\log \gamma|}\sum_{n=1}^\infty \frac{1}{\cosh\left (\frac{\pi ^2n}{\log \gamma}\right )}>0\ .$$
The proof is complete.
\end{proof}

Lemma \ref{NonVanishing} implies that $\Phi$ has no zeroes for $|\zeta|=1$ and $|z|\le 1$. By continuity, it has no zeroes in a neighbourhood of this set. Hence Lemma \ref{ZeroesPhi} is proved.

\subsection{Studying the index}

Let us first recall the definition of the index.
For $0<R<\infty$, we denote by $\mathcal C_R$ the circle $$\{z\in\C,\;|z|=R\}.$$  Let $f$ be a holomorphic function near $\mathcal C_R$, with no zero on $\mathcal C_R$. The index on $\mathcal C_R$ around $0$ of  $f$   is given by 
$${\rm Ind}_{f(\mathcal C_R)}(0):=\frac 1{2i\pi}\int_{\mathcal C_R}\frac{f'(\zeta)}{f(\zeta)}d\zeta.$$
 In this section, we prove the following lemma.
\begin{lemma}\label{index}
For any $r<1$ sufficiently close to $1$, the function $$\zeta\mapsto F_\gamma (r\zeta)$$ has index zero on the unit circle.
\end{lemma}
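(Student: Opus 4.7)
For $r>0$ such that $F_\gamma$ has neither zero nor pole on $\mathcal C_r$, set $N(r):={\rm Ind}_{F_\gamma(\mathcal C_r)}(0)$. Since the zeros $\gamma^{2\ell+1}$ and the poles $\gamma^{2\ell}$, $\ell\in\Z$, listed in the preceding lemma are isolated points of $\C\setminus\{0\}$, the map $r\mapsto N(r)$ is well defined and locally constant on the intervals $(\gamma,1)$ and $(1,1/\gamma)$. The lemma amounts to showing that $N(r)=0$ for $r\in(\gamma,1)$, since the index on the unit circle of $\zeta\mapsto F_\gamma(r\zeta)$ equals $N(r)$ by the change of variables $w=r\zeta$.

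My plan is to derive two independent linear relations between $N(r)$ and $N(1/r)$. The first comes from the functional equation \eqref{properties}. Parametrise $\mathcal C_r$ by $\zeta(\theta)=re^{i\theta}$, $\theta\in[0,2\pi]$; then $\theta\mapsto 1/\zeta(\theta)=(1/r)e^{-i\theta}$ traverses $\mathcal C_{1/r}$ clockwise, so the winding around $0$ of $F_\gamma\circ(1/\zeta)$ equals $-N(1/r)$. On the other hand, $F_\gamma(1/\zeta(\theta))=-re^{i\theta}F_\gamma(re^{i\theta})$, whose winding around $0$ is $1+N(r)$, the $+1$ coming from the explicit factor $-re^{i\theta}$. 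Equating yields
$$N(r)+N(1/r)=-1.$$

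The second relation comes from applying the argument principle to $F_\gamma$ on the annulus $\{r<|w|<1/r\}$. For every $r\in(\gamma,1)$, the zeros and poles of $F_\gamma$ nearest the unit circle are $\gamma,\gamma^2,\gamma^{-1},\gamma^{-2}$, none of which lies in that annulus; the only singularity of $F_\gamma$ there is the simple pole $w=1$, whose simplicity is apparent from \eqref{F_gamma2}. Hence
$$N(1/r)-N(r)=-1,$$
and combining the two identities gives $N(r)=0$ and $N(1/r)=-1$, which proves the lemma.

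The main obstacle to a more direct approach is that $F_\gamma$ has zeros and poles accumulating at $0$, so the argument principle cannot be applied on the full disc $|w|<r$: a naive zero/pole count would be an ill-defined difference of divergent sequences. The functional equation \eqref{properties} is precisely the device that sidesteps this accumulation, confining the effective count to a thin annulus around $|w|=1$ where only the pole at $w=1$ contributes. A minor bookkeeping point is to track the orientation reversal produced by $\zeta\mapsto 1/\zeta$, which is what makes the two linear relations non-redundant.
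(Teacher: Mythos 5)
Your two linear relations are the right idea, and the first one, $N(r)+N(1/r)=-1$, is derived exactly as in the paper (equation \eqref{general} there, via the inversion identity in \eqref{properties} and the extra winding contributed by the factor $-\zeta$). The problem is in the second relation. You assert that ``the zeros and poles of $F_\gamma$ nearest the unit circle are $\gamma,\gamma^2,\gamma^{-1},\gamma^{-2}$,'' but the preceding lemma only says that the $\gamma^{2\ell+1}$ \emph{are} zeros of $F_\gamma$, not that they are the \emph{only} zeros. (By contrast, the lemma \emph{does} characterise all poles, via \eqref{F_gamma2}.) So when you apply the argument principle on $\{r<|w|<1/r\}$ and write $N(1/r)-N(r)=Z-P=-1$, the conclusion $Z=0$ is precisely the content you still owe; if $F_\gamma$ had additional zeros near the unit circle your identity would read $N(1/r)-N(r)=Z-1$ and the argument would collapse. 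The same unproved zero-count is what lets you say $N$ is constant on $(\gamma,1)$.

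The gap is not fatal, because the lemma only requires $r$ close to $1$, and Lemma \ref{NonVanishing} --- already proved in the paper --- shows $\min_{|\zeta|=1}|F_\gamma(\zeta)|>0$, hence $F_\gamma$ has no zeros on $\mathcal C_1$. Since the zeros of the meromorphic function $F_\gamma$ are isolated in $\C\setminus\{0\}$ and $\mathcal C_1$ is compact, there is a zero-free annulus $1-\varepsilon<|w|<1+\varepsilon$; restricting to $r>1-\varepsilon$ gives $Z=0$ and your proof goes through. Alternatively, Remark \ref{elliptic} identifies $\zeta F_\gamma(\zeta)^2$ with a shifted Weierstrass function having a single double pole per period; equality of zero and pole counts for elliptic functions then forces $F_\gamma$ to have exactly one simple zero per fundamental annulus, giving the zero count directly. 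The paper's own proof takes yet a third route: it uses \emph{both} functional-equation identities, $I(R)+I(1/R)=-1$ and the periodicity $I(R\gamma^2)=I(R)$, together with the argument principle across several concentric circles, and in this way \emph{derives} that there are no zeros in $\gamma<|z|<1$ and none on $\mathcal C_1$, so it does not need to invoke Lemma \ref{NonVanishing} at all. Your version is shorter but transfers the burden to Lemma \ref{NonVanishing}; make that dependence explicit and the argument is complete.
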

Notice that $\Phi (0,r\zeta )=F_\gamma (r\zeta )$. As the index is valued in $\Z$ and the map $z\mapsto \Phi(z,\zeta)$ is smooth, Lemma \ref{index} implies that the index of $\zeta\mapsto \Phi(z,r\zeta)$ is zero for any $z$ with $|z|\leq 1+\rho $ as long as $r$ is sufficiently close to $1$.
\begin{corollary}
For any $r<1$ sufficiently close to $1$, the function $$\zeta\mapsto \Phi(z,r\zeta)$$ has index zero for any $z$.
\end{corollary}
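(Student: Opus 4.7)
The plan is to combine Lemma~\ref{index} (which handles the base case $z=0$) with Lemma~\ref{ZeroesPhi} (which provides the non-vanishing of $\Phi(z,\cdot)$ in a neighbourhood of the unit circle) via the standard principle that a continuous $\Z$-valued function on a connected set is constant. The corollary is essentially a compactness/continuity wrap-up of the two preceding lemmas.

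First I would fix the $\rho>0$ furnished by Lemma~\ref{ZeroesPhi}, and select $\eta>0$ small enough that $\Phi(z,w)\ne 0$ for every $z$ with $|z|\le 1+\rho$ and every $w$ in the closed annulus $1-\eta\le|w|\le 1+\eta$; compactness of the relevant parameter set, combined with continuity of $\Phi$, guarantees such an $\eta$ exists. Next I would pick $r\in(1-\eta,1)$ sufficiently close to $1$ so that Lemma~\ref{index} already yields ${\rm Ind}_{|\zeta|=1}\bigl(\zeta\mapsto F_\gamma(r\zeta)\bigr)=0$, and so that $|r\zeta|=r\in[1-\eta,1+\eta]$ for every $|\zeta|=1$. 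With these choices, for every $z$ with $|z|\le 1+\rho$ the map $\zeta\mapsto \Phi(z,r\zeta)$ is holomorphic in an annular neighbourhood of $|\zeta|=1$ and non-vanishing on $|\zeta|=1$, so its winding number around $0$,
$$I(z):=\frac{1}{2i\pi}\oint_{|\zeta|=1}\frac{\partial_\zeta\bigl[\Phi(z,r\zeta)\bigr]}{\Phi(z,r\zeta)}\,d\zeta,$$
is a well-defined integer.

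The crux is then to observe that $z\mapsto I(z)$ is continuous on the connected disc $\{|z|\le 1+\rho\}$. Indeed, from the explicit form $\Phi(z,w)=F_\gamma(w)-z\omega F_\gamma(w\omega^2)$, both $\Phi$ and $\partial_\zeta\Phi(z,r\zeta)$ are jointly continuous in $(z,\zeta)$, while by construction
$$\inf\bigl\{|\Phi(z,r\zeta)|:|z|\le 1+\rho,\ |\zeta|=1\bigr\}>0,$$
so the integrand defining $I(z)$ is uniformly continuous in $z$ on $\{|z|\le 1+\rho\}\times\{|\zeta|=1\}$. Therefore $I$ is a continuous $\Z$-valued function on a connected set, hence constant.

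Finally, I would evaluate at $z=0$: since $\Phi(0,r\zeta)=F_\gamma(r\zeta)$, Lemma~\ref{index} yields $I(0)=0$, and constancy then gives $I(z)=0$ for every $z$ with $|z|\le 1+\rho$, which is the assertion of the corollary in the range that is actually used (the phrase \emph{for any $z$} in the statement is to be understood within the domain where the index makes sense, i.e.\ $|z|\le 1+\rho$). I do not anticipate any genuine obstacle in carrying this out: the two preceding lemmas deliver exactly the base case and the uniform non-vanishing that the homotopy-of-index argument requires, and the remaining continuity assertions are routine consequences of compactness and the holomorphic dependence of $\Phi$ on $z$.
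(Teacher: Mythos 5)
Your proposal is correct and is essentially the paper's own argument: the paper likewise notes that $\Phi(0,r\zeta)=F_\gamma(r\zeta)$, observes that the index is $\Z$-valued and depends continuously (smoothly) on $z$, and concludes from Lemma~\ref{index} that it must vanish for all $z$ with $|z|\le 1+\rho$ once $r$ is close enough to $1$. You have merely spelled out the compactness and uniform non-vanishing details that the paper leaves implicit.
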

This corollary will complete the proof of Proposition \ref{invertibility}.
\begin{proof} Proof of Lemma \ref{index}.\\
{We could  use Remark \ref{elliptic} in order to reduce to properties of Weierstrass $\mathfrak P$ function. However, for the convenience of the reader, we prefer to give a self--contained proof}. Let us assume that $R$ is chosen so that $R\neq \gamma^{2\ell},\; \ell \in \Z$ and $F_\gamma\neq 0$ on $\mathcal C_R$. We consider the index of $F_\gamma$ on $\mathcal C_R$ around $0$
$$I(R):={\rm Ind}_{F_\gamma(\mathcal C_R)}(0):=\frac 1{2i\pi}\int_{\mathcal C_R}\frac{F'_\gamma(\zeta)}{F_\gamma(\zeta)}d\zeta.$$
Statement of Lemma \ref{index} is equivalent to $$I(1^-):=\lim_{R\to 1^-} I(R)=0.$$ By definition, $I$ is valued in $\Z$ and is continuous on the intervals corresponding to the circles avoiding the zeroes and the poles of $F_\gamma$.
From properties (\ref{properties}), one has
\begin{equation}\label{general}
I(R)+I\left(\frac 1R\right)=-1,\; I(R\gamma^2)=I(R).
\end{equation}
In particular \begin{equation}\label{I(rho)}
I(R)+I\left(\frac {\gamma^2}R\right)=-1
\end{equation}
and
\begin{equation}\label{firstEquality}
I(1^+)=I((\gamma^2)^+) 
\end{equation}where $I(r^\pm)=\lim_{t\to r^\pm} I(t)$. We are going to compute $I((\gamma^2)^+)$ by another way, using the zeroes and the poles of $F_\gamma$. \\
 Let us first collect some basic relations.\\
Let $n$ be the number of zeroes in the annulus $$\{z\in\C,\; \gamma<|z|<1\}.$$ Since there is no poles inside this annulus, one has 
\begin{equation}\label{first}
n=I(1^-)-I(\gamma^+)
\end{equation}  From Equation (\ref{I(rho)}) with $R=1^-$ and $R=\gamma^+$, 
\begin{eqnarray*}
I(1^-)+I((\gamma^2)^+)&=&-1\; \mbox{ and }\\
I(\gamma^+)+I(\gamma^-)&=&-1.
\end{eqnarray*}
Substracting these equalities gives $I(1^-)-I(\gamma^+)= I(\gamma^-)-I((\gamma^2)^+)$ hence 
 \begin{equation}\label{second}
n=I(\gamma^-)-I((\gamma^2)^+).
\end{equation} 
Denote by $m$ the number of zeroes on $\mathcal C_\gamma$. As $\gamma$ is a zero of $F_\gamma$, {$m\geq1$}, and
\begin{equation}\label{trois}
m=I(\gamma^+)-I(\gamma^-)
\end{equation} 
since there is no pole on $\mathcal C_\gamma$.
Denote by $N$ the number of zeroes on $\mathcal C_1$ then
\begin{equation}\label{fourth}I(1^+)-I(1^-)=N-1
\end{equation}\label{third} since $1$ is the only pole on  $\mathcal C_1$. 

Now, we compute $I((\gamma^2)^+)$. 
From equality (\ref{second}),
\begin{eqnarray*}
I((\gamma^2)^+)&=&I(\gamma^-)-n\\
&=&I(\gamma^+)-m-n\mbox{ from equality }(\ref{trois})\\
&=&I(1^-)-n-m-n\mbox{ from equality }(\ref{first})\\
&=&I(1^+)-(N-1)-m-2n\mbox{ from equality }(\ref{fourth})
\end{eqnarray*}

Recalling equality (\ref{firstEquality}), we conclude that $N+2n+m=1$ so $n=0$ and $N+m=1$. Since $m\geq 1$, this implies $N=0$ and $m=1$.
From Equation (\ref{fourth}) and the equality  $I(1^+)+I(1^-)=-1$ (Equation (\ref{I(rho)}) with $R=1^+$), one concludes 
$I(1^+)=-1$ and $I(1^-)=0$ as required.
\end{proof}

\begin{appendix}
\section{A formula for the $C^1$ norm }\label{C1}
Let $u\in L^2_+(\T)$ be a rational function  corresponding to the finite list of singular values
$\rho_1>\sigma_1>\dots >\rho_N>\sigma_N$
and angles $\psi_r=0$ for $r=1,\dots ,2N$. Then we  checked in \cite{GG2}, \cite{GG3} that this cancellation of the angles   precisely corresponds to the positivity of the operators $\Gamma_{\hat u}$ and $\tilde \Gamma_{\hat u}$ on $\ell^2(\Z_+)$.
The representation formula \eqref{uCN}, \eqref{CN} then reduces to 
$$u(z)=\langle {\mathscr C}_N(z)^{-1}({\bf 1}_N),{\bf 1}_N\rangle \ ,$$
with
\begin{equation}\label{rep}
{\mathscr C}_N(z):=\left (\frac{s_{2j-1}-s_{2k}z}{s_{2j-1}^2-s_{2k}^2} \right )_{1\leq j,k\leq N}\ ,
\end{equation}
Furthermore,  the positivity of the Hankel matrices $\Gamma_{\hat u}$ and $\tilde \Gamma_{\hat u}$ implies the positivity of the Fourier coefficitients of $u$, since, denoting by $(e_n)_{n\ge 0}$ the canonical basis of $\ell ^2(\Z_+)$,
$$\langle \Gamma_{\hat u}e_n,e_n\rangle =\hat u(2n)\ ,\ \langle \tilde \Gamma_{\hat u}e_n,e_n\rangle =\hat u(2n+1)\ .$$
Therefore the $C^1$ norm of $u$ on $\T$ is given by
$$S(u):= \sum_{n=1}^\infty n\hat u(n)\ .$$
The  lemma below explicitly computes $S(u)$.
\begin{lemma}\label{S}
$$S(u)=\sum_{k=1}^N\sigma_k\left (\prod_{j=1}^N \frac{\rho_j+\sigma_k}{\rho_j-\sigma_k}\right )\left (\prod_{\ell \ne k}\frac{\sigma_k+\sigma_\ell}{\sigma_\ell -\sigma_k}\right )\ ,$$
where every term in the above sum is positive.
\end{lemma}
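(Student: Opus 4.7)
The plan is to compute $S(u)$ as $u'(1)$ using the representation \eqref{rep} and the fact that $\mathscr{C}_N(1)$ is a Cauchy-type matrix. Since $u$ is rational with poles outside $\overline{\D}$ and has nonnegative Fourier coefficients, the series $\sum_{n\ge 1}n\hat u(n)$ converges and equals $u'(1)$. Writing $\mathscr C_N(z)=\mathscr C_N(0)-zB$ with $B_{jk}:=s_{2k}/(s_{2j-1}^2-s_{2k}^2)$, the derivative of a matrix inverse yields
$$u'(z)=\langle \mathscr C_N(z)^{-1}B\mathscr C_N(z)^{-1}{\bf 1}_N,{\bf 1}_N\rangle,$$
and at $z=1$ the matrix $\mathscr C_N(1)$ has entries $1/(\rho_j+\sigma_k)$, i.e.\ a Cauchy matrix with nodes $\rho_j$ and $-\sigma_k$.

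Next I would invert $\mathscr C_N(1)$ on ${\bf 1}_N$ by the interpolation trick: the equation $\mathscr C_N(1)v={\bf 1}_N$ forces the rational function $g(z)=\sum_k v_k/(z+\sigma_k)$ to take the value $1$ at each $\rho_j$, whence
$$g(z)=1-\frac{\prod_i(z-\rho_i)}{\prod_k(z+\sigma_k)},$$
and extracting residues at $z=-\sigma_k$ gives $v_k=\prod_i(\sigma_k+\rho_i)/\prod_{\ell\ne k}(\sigma_k-\sigma_\ell)$. The symmetric argument, solving $\mathscr C_N(1)^Tw={\bf 1}_N$, produces $w_j=\prod_i(\rho_j+\sigma_i)/\prod_{\ell\ne j}(\rho_j-\rho_\ell)$. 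Consequently
$$u'(1)=\langle Bv,w\rangle=\sum_{j,k}\frac{\sigma_k\,v_k\,w_j}{(\rho_j-\sigma_k)(\rho_j+\sigma_k)}.$$

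The core step is to collapse the double sum into a single sum over $k$ via a residue argument. After cancelling the factor $(\rho_j+\sigma_k)$ hidden in $w_jv_k$, the inner sum over $j$ at fixed $k$ equals
$$\frac{\sigma_k\prod_i(\sigma_k+\rho_i)}{\prod_{\ell\ne k}(\sigma_k-\sigma_\ell)}\sum_j\operatorname{Res}_{z=\rho_j}\frac{F(z)}{z^2-\sigma_k^2},\qquad F(z):=\frac{\prod_i(z+\sigma_i)}{\prod_i(z-\rho_i)}.$$
Since $F(z)/(z^2-\sigma_k^2)=O(z^{-2})$ at infinity and $F(-\sigma_k)=0$ thanks to the vanishing factor $(-\sigma_k+\sigma_k)$ in the numerator, the residue theorem reduces the sum of the $\rho_j$-residues to $-F(\sigma_k)/(2\sigma_k)=-\prod_{i\ne k}(\sigma_k+\sigma_i)/\prod_i(\sigma_k-\rho_i)$, where I used $\prod_i(\sigma_k+\sigma_i)=2\sigma_k\prod_{i\ne k}(\sigma_k+\sigma_i)$. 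Substituting back and rewriting $(\sigma_k-\rho_i)=-(\rho_i-\sigma_k)$ and $(\sigma_k-\sigma_\ell)=-(\sigma_\ell-\sigma_k)$, the signs $(-1)^N$, $(-1)^{N-1}$ and the overall minus sign combine to $+1$, producing the stated formula.

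The main obstacle is recognising the inner sum as a residue sum for the single meromorphic function $F(z)/(z^2-\sigma_k^2)$ and exploiting the cancellation $F(-\sigma_k)=0$, which kills the residue at $-\sigma_k$ that would otherwise pollute the formula. Positivity of each term then follows purely from the interlacement $\rho_1>\sigma_1>\dots>\rho_N>\sigma_N$: both $\prod_j(\rho_j-\sigma_k)$ and $\prod_{\ell\ne k}(\sigma_\ell-\sigma_k)$ carry the same sign $(-1)^{N-k}$, and the corresponding numerators $\prod_j(\rho_j+\sigma_k)$ and $\prod_{\ell\ne k}(\sigma_k+\sigma_\ell)$ are positive, so the two sign contributions cancel.
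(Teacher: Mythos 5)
Your proposal is correct and follows the same overall strategy as the paper: compute $S(u)=u'(1)$, differentiate the matrix inverse, recognize $\mathscr C_N(1)$ as a Cauchy matrix and solve $\mathscr C_N(1)v={\bf 1}_N$, $\mathscr C_N(1)^Tw={\bf 1}_N$ by the interpolation trick, and then collapse the resulting double sum over $j$ at fixed $k$. The only genuine divergence is in the last step. The paper multiplies and divides $\mu_{jk}^{(N)}$ by $\prod_{i\ne j}(\rho_i-\sigma_k)$, recognizes the inner $j$-sum as a degree-$(N-1)$ polynomial $R(\sigma)$ evaluated at $\sigma=\sigma_k$, and identifies $R$ by Lagrange interpolation at the nodes $\rho_1,\dots,\rho_N$. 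You instead package the inner $j$-sum as $\sum_j\operatorname{Res}_{z=\rho_j}F(z)/(z^2-\sigma_k^2)$ with $F(z)=\prod_i(z+\sigma_i)/\prod_i(z-\rho_i)$, note that $F(z)/(z^2-\sigma_k^2)=O(z^{-2})$ at infinity so the total residue vanishes, and use that the residue at $-\sigma_k$ is killed by the zero of $F$. These two closures are essentially dual to one another (Lagrange interpolation being a residue statement in disguise); yours is slightly more streamlined in that it avoids the multiply-and-divide manipulation and the intermediate polynomial $R$, at the price of requiring one to notice the precise decay rate at infinity and the removable singularity at $-\sigma_k$. The bookkeeping of signs $(-1)^N$, $(-1)^{N-1}$ and the positivity argument from the interlacement $\rho_1>\sigma_1>\dots>\rho_N>\sigma_N$ (both $\prod_j(\rho_j-\sigma_k)$ and $\prod_{\ell\ne k}(\sigma_\ell-\sigma_k)$ have sign $(-1)^{N-k}$) are handled correctly.
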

\begin{proof}
We have
$$S(u)=u'(1)=\langle \dot {\mathscr C}\mathscr C(1)^{-1}({\bf 1}), ^t\mathscr C(1)^{-1}({\bf 1})\rangle \ ,$$
with
$$\mathscr C(1):=\left (\frac{1}{\rho_j+\sigma_k}\right )_{1\leq j,k\leq N}\ ,\ \dot{\mathscr C}:=\left (\frac{\sigma_k}{\rho_j^2-\sigma_k^2}\right )_{1\leq j,k\leq N}\ .$$
Notice that $\mathscr C(1)$ is a Cauchy matrix,  so that the expression of $\mathscr C(1)^{-1}({\bf 1})$ is explicit. We have
\begin{equation}\label{Cauchy}
\mathscr C(1)^{-1}({\bf 1})=\left ( \frac{\prod_{j=1}^N(\rho_j+\sigma_k)}{\prod_{\ell \ne k}(\sigma_k-\sigma_\ell) }\right )_{1\le k\le N}\ .\end{equation}
Let us give a simple proof of this formula, inspired from calculations in \cite{GP2}. Denote by $x_k, k=1,\dots ,N$, the components of $\mathscr C(1)^{-1}({\bf 1})$. We have 
$$\sum_{k=1}^N \frac{x_k}{\rho_j+\sigma_k}=1\ ,\ j=1,\dots ,N\ .$$
Consider the polynomial functions
$$Q(\rho ):=\prod_{k=1}^N(\rho +\sigma_k)\ ,\ P(\rho ):=Q(\rho )\sum_{k=1}^N \frac{x_k}{\rho+\sigma_k}\ .$$
Then $Q$ has degree $N$, $P$ has degree at most $N-1$ and 
$$P(\rho_j)=Q(\rho_j)\ ,\ j=1,\dots N\ .$$
Since $Q-P$ is a unitary polynomial of degree $N$ which cancels at $\rho_j, j=1,\dots ,N$, we have
$$Q(\rho )-P(\rho )=\prod_{j=1}^N (\rho -\rho_j)\ .$$
Consequently, 
$$P(-\sigma_k)=-\prod_{j=1}^N (-\sigma_k-\rho_j)=(-1)^{N-1}\prod_{j=1}^N (\sigma_k+\rho_j)\ .$$
Since 
$$x_k=\frac{P(-\sigma_k)}{Q'(-\sigma_k)}\ ,$$
this yields \eqref{Cauchy}. Similarly, we have
\begin{equation}\label{CauchyT}
\ ^t\mathscr C(1)^{-1}({\bf 1})=\left (\frac{\prod_{\ell =1}^N(\rho_j+\sigma_\ell )}{\prod_{i \ne j}(\rho_j-\rho_i) }\right )_{1\le j\le N}\ .
\end{equation}
Coming back to the proof of Lemma \ref{S}, we have, in view of \eqref{Cauchy} and \eqref{CauchyT}, 
$$S(u)=\sum_{j,k=1}^N\mu_{jk}^{(N)}\ ,\ \mu_{jk}^{(N)}:=\sigma_k\frac{\rho_j+\sigma_k}{\rho_j-\sigma_k}\left (\prod_{i\ne j}\frac{\rho_i+\sigma_k}{\rho_j-\rho_i}\right )\left (\prod_{\ell \ne k}\frac{\rho_j+\sigma_\ell}{\sigma_k-\sigma_\ell}\right )\ .$$
Multiplying and dividing $\mu_{jk}^{(N)}$ by $\prod_{i\ne j}(\rho_i-\sigma_k)$, we have, for every $k$,
$$\sum_{j=1}^N \mu_{jk}^{(N)}=\frac{\sigma_k R(\sigma_k)}{\prod_{\ell \ne k}(\sigma_k-\sigma_\ell)}\prod_{i=1}^N\frac{\rho_i+\sigma_k}{\rho_i-\sigma_k}\ ,$$
with
$$R(\sigma )=\sum_{j=1}^N \prod_{i\ne j}\frac{\rho_i-\sigma}{\rho_j-\rho_i}\prod_{\ell \ne k}(\rho_j+\sigma_\ell)\ .$$
Notice that, for every $j=1,\dots ,N$, 
$$R(\rho_j)=(-1)^{N-1}\prod_{\ell \ne k}(\rho_j+\sigma_\ell)\ .$$
Since $R$ has degree $N-1$, we infer
$$R(\sigma )=(-1)^{N-1}\prod_{\ell \ne k}(\sigma +\sigma_\ell )\ ,$$
so that
$$\sum_{j=1}^N \mu_{jk}^{(N)}=\frac{\sigma_k (-1)^{N-1}\prod_{\ell \ne k}(\sigma_k +\sigma_\ell )}{\prod_{\ell \ne k}(\sigma_k-\sigma_\ell)}\prod_{i=1}^N\frac{\rho_i+\sigma_k}{\rho_i-\sigma_k}\ ,$$
which is the claimed formula. The positivity of each term is an easy consequence of the inequalities $\rho_1>\sigma_1>\rho_2>\sigma_2>\dots $
\end{proof}
As a consequence of Lemma \ref{S}, we retain the following  inequality, obtained after discarding most of the factors bigger than 1 in each of the products.
\begin{corollary}\label{ineqC1}
$$\Vert u\Vert_{C^1}\geq \sum_{k=1}^N\frac{\sigma_k(\rho_k+\sigma_k)}{\rho_k-\sigma_k}\ .$$
\end{corollary}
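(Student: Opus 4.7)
The idea is to work directly from the exact identity provided by Lemma \ref{S} and to obtain the lower bound by retaining only a single, well-chosen factor in each of the two products appearing in the $k$-th summand and then verifying that the remaining product is at least $1$. Since the discussion preceding Lemma \ref{S} identifies $\Vert u\Vert_{C^1}$ with $S(u)=\sum_{n\geq 1} n\hat u(n)$ (using the nonnegativity of the Fourier coefficients forced by the positivity of $\Gamma_{\hat u}$ and $\tilde\Gamma_{\hat u}$), any lower bound on $S(u)$ immediately transfers to $\Vert u\Vert_{C^1}$.

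Concretely, fix $k\in\{1,\dots,N\}$ and isolate the $j=k$ factor in the first product, writing the $k$-th term of the sum in Lemma \ref{S} as
$$\sigma_k\Bigl(\prod_{j=1}^N\frac{\rho_j+\sigma_k}{\rho_j-\sigma_k}\Bigr)\Bigl(\prod_{\ell\neq k}\frac{\sigma_k+\sigma_\ell}{\sigma_\ell-\sigma_k}\Bigr)=\frac{\sigma_k(\rho_k+\sigma_k)}{\rho_k-\sigma_k}\,R_k,$$
where $R_k$ gathers the $2(N-1)$ remaining factors. The whole proof then reduces to checking that $R_k\ge 1$.

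For this, I would split each of the two products in $R_k$ according to whether the running index is smaller or larger than $k$, and use the interlacement $\rho_1>\sigma_1>\rho_2>\sigma_2>\cdots$. For $j<k$ one has $\rho_j>\sigma_k>0$, so $(\rho_j+\sigma_k)/(\rho_j-\sigma_k)$ is positive and at least $1$ since $\sigma_k>0$; for $j>k$, $\rho_j<\sigma_k$ gives a negative factor whose modulus $(\rho_j+\sigma_k)/(\sigma_k-\rho_j)$ is still $\ge 1$ since $\rho_j>0$. The identical analysis, with $\sigma_\ell$ replacing $\rho_j$, handles the second product. Counting negatives, there are exactly $N-k$ in each product, giving an even total of $2(N-k)$, so $R_k>0$; combined with the modulus estimate this gives $R_k\ge 1$.

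Summing the resulting bound $T_k\ge \sigma_k(\rho_k+\sigma_k)/(\rho_k-\sigma_k)$ over $k$ and invoking Lemma \ref{S} yields $\Vert u\Vert_{C^1}\ge\sum_k \sigma_k(\rho_k+\sigma_k)/(\rho_k-\sigma_k)$, which is exactly the corollary. There is no real obstacle; the only point requiring care is the sign bookkeeping in the definition of $R_k$, namely the verification that the $2(N-k)$ negative factors combine evenly, so that the lower bound $R_k\ge 1$ genuinely holds with the correct sign.
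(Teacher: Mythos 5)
Your proof is correct and follows the same route as the paper: the paper derives the corollary from Lemma~\ref{S} by the one-line remark that it is ``obtained after discarding most of the factors bigger than $1$ in each of the products,'' and you simply make explicit the sign bookkeeping that this remark leaves implicit, namely that the $2(N-k)$ negative factors pair up to give a positive $R_k$ with each factor of modulus at least $1$.
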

Notice that this implies inequality \eqref{estC1}. Unfortunately, at this stage we do not have arguments allowing to extend this inequality to non rational functions $u$, which would imply that $u\notin C^1$ if 
$$\sum_{k=1}^\infty\frac{\rho_k\sigma_k}{\rho_k-\sigma_k}=\infty \ .$$
\end{appendix}

\end{document}